\theoremstyle{plain} 
\newtheorem{theorem}{\indent\sc Theorem}[section]
\newtheorem{proposition}[theorem]{\indent\sc Proposition}
\theoremstyle{definition} 
\newtheorem{remark}[theorem]{\indent\sc Remark}
\newtheorem{example}[theorem]{\indent\sc Example}
\newcommand\on{\operatorname}
\renewcommand\div{\on{div}}
\newcommand\Ric{\on{Ric}}
\newcommand\trace{\on{trace}}
\title{Some properties of hyperbolic Yamabe solitons}
\author{Adara M. Blaga and Cihan \"{O}zg\"{u}r}
\date{}
\begin{document}

\maketitle

\markboth{{\small\it {\hspace{2cm} Some properties of hyperbolic Yamabe solitons}}}{\small\it{Some properties of hyperbolic Yamabe solitons \hspace{2cm}}}

\footnote{ 
2020 \textit{Mathematics Subject Classification}.
37K40; 53C21; 53C25; 53C50; 53Z05.
}
\footnote{ 
\textit{Key words and phrases}.
Hyperbolic Yamabe flow, hyperbolic Yamabe soliton, concurrent vector field, immersed submanifold, warped product manifold.
}

\begin{abstract}
We define the hyperbolic Yamabe flow and obtain some properties of its stationary solutions, namely, of hyperbolic Yamabe solitons. We consider immersed submanifolds as hyperbolic Yamabe solitons and prove that, under certain assumptions, a hyperbolic Yamabe soliton hypersurface is a pseudosymmetric or a metallic shaped hypersurface. We characterize the hyperbolic Yamabe soliton factor manifolds of a multiply twisted, multiply warped, doubly warped, and warped product manifold and provide a classification for a complete gradient hyperbolic Yamabe soliton factor manifold. We also determine the conditions for the factor manifolds to be hyperbolic Yamabe solitons if the manifold is a hyperbolic Yamabe soliton and illustrate this result for a physical model of the universe, namely, for the Robertson--Walker spacetime.
\end{abstract}

\section{Introduction}

After the fundamental work of Hamilton \cite{ham}, where he introduced the Ricci flow in order to study compact $3$-dimensional manifolds with positive Ricci curvature, the theory of solitons, as stationary solutions of various geometric flows, has been lately developed a lot. New geometrical properties of solitons have been determined, especially connected to curvature, and obstructions for a manifold to be a soliton have been established.
Yamabe flow was introduced also by Hamilton in order to solve the Yamabe problem \cite{hamil}. While it is known that the Ricci flow controls the heat diffusion, in 2006, Kong and Liu considered the hyperbolic Ricci flow (called in that paper, the hyperbolic geometric flow), a useful tool in modeling and understanding the wave phenomena and also the geometry of a Riemannian manifold (see \cite{li}). For some recent developments into the theory of hyperbolic Ricci solitons, we can refer to \cite{az1,az2,ad1,ac9,ac4,az3,kaci, paha,sid, sidi, yo}.


Motivated by these studies, in the present paper, we consider a new evolution equation, namely, the hyperbolic Yamabe flow. In Section 2, we focus on studying some properties of hyperbolic Yamabe solitons, providing also conditions for its potential vector field to be Killing. In Section 3, we consider isometrically immersed submanifolds as hyperbolic Yamabe solitons having as potential vector field the tangential component $\zeta^{\top}$ of a concurrent vector field $\zeta$ on the ambient manifold and obtain some properties of the shape operator. In particular, we deduce that the totally geodesic submanifolds are hyperbolic Yamabe solitons with potential vector field $\zeta^{\top}$ if and only if they have constant scalar curvature, and, if the submanifold is a hyperbolic Yamabe soliton hypersurface with parallel shape operator isometrically immersed into a space of constant sectional curvature, then it is pseudosymmetric and metallic shaped. In Section 4, we focuse on manifolds of warped product-type.
We determine the necessary and sufficient conditions for the factor manifolds of a multiply twisted, multiply warped, doubly warped, and warped product manifold to be hyperbolic Yamabe solitons. We show that, if any of the factor manifolds $(M_i,g_i,\zeta_i)$ of a multiply twisted, multiply warped, doubly warped, or warped product manifold endowed with a $2$-Killing vector field $\zeta$ is a complete hyperbolic Yamabe soliton having as potential vector field of gradient type, then it is isometric to a complete warped product manifold. Finally, we determine the conditions when the factor manifolds are hyperbolic Yamabe solitons if the manifold is a hyperbolic Yamabe soliton and we illustrate this result for a particular case of warped product manifold, namely, for the Robertson--Walker spacetime. More about the physical meaning of spacetimes can be found, for example, in \cite{car}.
Also, it is worth mentioning that the differentiable manifolds of warped product-type are suitable modeling spaces for various physical phenomena, playing an essential role especially in the theory of relativity. We refer to the following papers for the definitions of a warped product manifold \cite{Bi}, a doubly warped product \cite{erlich}, a multiply warped product \cite{unal}, and
a multiply twisted product \cite{wang}. More about their applications can be found in \cite{On}.

\section{Hyperbolic Yamabe flow and hyperbolic Yamabe solitons}

We introduce here the notion of \textit{hyperbolic Yamabe flow} as being an evolution equation
\begin{align}\label{fl}
\frac{\partial^2 g}{\partial t^2}(t)&=-r(t)g(t),
\end{align}
for $g$ a time-depending Riemannian metric on a smooth manifold $M$ (where $r$ stands for the scalar curvature).
We consider the family of vector fields $\zeta(t):=\displaystyle \frac{1}{f(t)}\zeta _0$ tangent to $M$, for $f$ a smooth positive real function from a real open interval $(a,b)$ containing $0$ with $f(0)=1$ (and $\zeta_0=\zeta(0)$), and we denote by $\{\varphi_t\}_{t\in (a,b)}$ the group of diffeomorphisms of $\{\zeta(t)\}_{t\in (a,b)}$. We define the family of Riemannian metrics $\{g(t):=f(t)\varphi_t^*(g_0)\}_{t\in (a,b)}$ (with $g_0=g(0)$). Then:
\begin{align*}
\frac{\partial g}{\partial t}(t)&=\frac{\partial }{\partial t}\left(f(t)\varphi_t^*(g_0)\right)\\
&=f'(t)\varphi_t^*(g_0)+\varphi_t^*(\pounds_{\zeta_0}g_0),\\
\frac{\partial^2 g}{\partial t^2}(t)&=\frac{\partial }{\partial t}\left(f'(t)\varphi_t^*(g_0)+\varphi_t^*(\pounds_{\zeta_0}g_0)\right)\\
&=f''(t)\varphi_t^*(g_0)+\displaystyle \frac{f'(t)}{f(t)}\varphi_t^*(\pounds_{\zeta_0}g_0)+\displaystyle \frac{1}{f(t)}\varphi_t^*(\pounds_{\zeta_0}\pounds_{\zeta_0}g_0),
\end{align*}
where $\pounds_{\zeta_0}g_0$ is the Lie derivative of the metric $g_0$ in the direction of $\zeta_0$ and $\pounds_{\zeta_0}\pounds_{\zeta_0}g_0:=\pounds_{\zeta_0}(\pounds_{\zeta_0}g_0)$ is the second Lie derivative of $g_0$ in the direction of $\zeta_0$.
By means of \eqref{fl}, we infer
\begin{align*}
\pounds_{\zeta_0}\pounds_{\zeta_0}g_0+f'(0)\pounds_{\zeta_0}g_0=\left(-f''(0)-r_0\right)g_0,
\end{align*}
where $r_0=r(0)$ is the scalar curvature of $(M,g_0)$. Now, choosing $f$ to satisfy $f'(0)=\lambda\in \mathbb R$ and $f''(0)=-\mu\in \mathbb R$, we get the equation of the stationary solutions of the hyperbolic Yamabe flow, namely, the \textit{hyperbolic Yamabe solitons}:
\begin{align}\label{e1}
\pounds_{\zeta}\pounds_{\zeta}g+\lambda \pounds_{\zeta}g&=(\mu-r)g
\end{align}
for $\zeta$ a vector field tangent to $M$ and $\lambda,\mu\in \mathbb R$. In the sequel, we will use the notation $(M^n,g,\zeta,\lambda,\mu)$ for an $n$-dimensional hyperbolic Yamabe soliton and we will call $\zeta$ the potential vector field.

Since we know from \cite{a, ac0} that
\begin{align}
\div(\pounds_{\zeta}g)&=2\left(d(\div(\zeta))+i_{Q\zeta}g\right),\label{e2}\\
\trace(\pounds_{\zeta}\pounds_{\zeta}g)&=2\left(\Vert \nabla {\zeta}\Vert^2+\div(\nabla_{\zeta}\zeta)-\Ric(\zeta,\zeta)\right),\label{e3}
\end{align}
where $\Ric$ is the Ricci curvature tensor field, $Q$ is the Ricci operator defined by $g(QX,Y):=\Ric(X,Y)$ for any vector fields $X,Y$ tangent to $M$, and $\nabla$ is the Levi-Civita connection of $g$, we can firstly conclude

\begin{proposition}
Let $(M^n,g,\zeta,\lambda,\mu)$ be a hyperbolic Yamabe soliton.

(i) If $M$ is closed and $\pounds_{\zeta}\pounds_{\zeta}g$ is trace-free, then $$\int_M(\mu-r)=0.$$

Moreover, if $\int_M\Ric(\zeta,\zeta) \leq 0$, then $\zeta$ is a parallel vector field (i.e., $\nabla \zeta=\nolinebreak 0$).

(ii) If $\pounds_{\zeta}\pounds_{\zeta}g$ is divergence-free, then
$$\nabla r=-2\lambda\left(\nabla (\div(\zeta))+Q\zeta\right).$$

Moreover, if $r$ is constant, then either $\lambda=0$ or $Q\zeta=-\nabla (\div(\zeta))$ (is a gradient vector field).
In the second case, if $M$ is compact, connected, and $\lambda\neq 0$, then $\Ric(\zeta,\zeta)=0$.
\end{proposition}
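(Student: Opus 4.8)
The plan is to take the soliton equation \eqref{e1} and extract scalar information by applying the two trace/divergence identities \eqref{e2} and \eqref{e3}. The equation reads $\pounds_{\zeta}\pounds_{\zeta}g+\lambda \pounds_{\zeta}g=(\mu-r)g$, so both its trace and its divergence are immediately available once we know the corresponding quantities for the left-hand side. For part (i), I would take the trace of both sides: the trace of $(\mu-r)g$ is $n(\mu-r)$, the hypothesis kills $\trace(\pounds_{\zeta}\pounds_{\zeta}g)$, and the remaining term is $\lambda\,\trace(\pounds_{\zeta}g)=2\lambda\,\div(\zeta)$. Integrating over the closed manifold $M$ makes $\int_M \div(\zeta)=0$ by the divergence theorem, so $\int_M n(\mu-r)=0$, giving the first conclusion. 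For the \emph{parallel} assertion, I would instead trace via \eqref{e3}: the trace-free hypothesis forces $\Vert\nabla\zeta\Vert^2+\div(\nabla_{\zeta}\zeta)-\Ric(\zeta,\zeta)=0$ pointwise (up to the $\lambda$-term, which again integrates away), so integrating and using $\int_M\div(\nabla_\zeta\zeta)=0$ leaves $\int_M\Vert\nabla\zeta\Vert^2=\int_M\Ric(\zeta,\zeta)\le 0$; since $\Vert\nabla\zeta\Vert^2\ge 0$ this forces $\nabla\zeta=0$.

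For part (ii), I would apply the divergence operator to \eqref{e1} instead of the trace. The divergence of the right-hand side $(\mu-r)g$ is $-\nabla r$ (since $\div(hg)=\nabla h$ for a function $h$, and $\mu$ is constant), the $\div(\pounds_{\zeta}\pounds_{\zeta}g)$ term vanishes by hypothesis, and \eqref{e2} gives $\div(\pounds_{\zeta}g)=2(\nabla(\div\zeta)+Q\zeta)$ after identifying the $1$-forms $d(\div\zeta)$ and $i_{Q\zeta}g$ with their metric-dual vector fields $\nabla(\div\zeta)$ and $Q\zeta$. Putting these together yields $\lambda\cdot 2(\nabla(\div\zeta)+Q\zeta)=-\nabla r$, which is exactly the stated identity $\nabla r=-2\lambda(\nabla(\div(\zeta))+Q\zeta)$. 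The dichotomy when $r$ is constant is then immediate: constant $r$ gives $\nabla r=0$, so $2\lambda(\nabla(\div\zeta)+Q\zeta)=0$, whence either $\lambda=0$ or $Q\zeta=-\nabla(\div\zeta)$.

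For the final claim in (ii), suppose $M$ is compact and connected, $\lambda\ne 0$, so that $Q\zeta=-\nabla(\div\zeta)$. I would compute $\int_M \Ric(\zeta,\zeta)=\int_M g(Q\zeta,\zeta)=-\int_M g(\nabla(\div\zeta),\zeta)$ and then integrate by parts: $\int_M g(\nabla(\div\zeta),\zeta)=-\int_M (\div\zeta)(\div\zeta)=-\int_M(\div\zeta)^2$, so $\int_M\Ric(\zeta,\zeta)=\int_M(\div\zeta)^2\ge 0$. To pin down the sign the other way, I would revisit part (ii)'s hypothesis together with the trace identity \eqref{e3} to produce the reverse inequality, and conclude that both sides vanish, forcing $\Ric(\zeta,\zeta)=0$; here connectedness and compactness are used so that a nonnegative integrand with zero integral must vanish identically.

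The step I expect to be the main obstacle is the very last one: getting $\Ric(\zeta,\zeta)=0$ rather than merely an integral inequality. The integration-by-parts computation above only delivers $\int_M\Ric(\zeta,\zeta)=\int_M(\div\zeta)^2\ge 0$, so an additional relation is needed to force the pointwise vanishing — most likely combining the constancy of $r$, the trace of the soliton equation (which under constant $r$ links $\div\zeta$ to $\mu-r$), and the Bochner-type identity hidden in \eqref{e3} to show $\int_M(\div\zeta)^2\le 0$ as well. Reconciling these two integral estimates so that the only consistent possibility is $\div\zeta\equiv 0$ and $Q\zeta\equiv 0$ is where the argument must be handled carefully, and it is precisely where compactness and connectedness enter.
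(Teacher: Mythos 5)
Your treatment of part (i) and of the displayed identity and dichotomy in part (ii) is correct and coincides with the paper's own proof: trace the soliton equation and integrate for the first claim, use \eqref{e3} under the trace-free hypothesis for the parallelism claim, and take the divergence of \eqref{e1} together with \eqref{e2} for part (ii). (One small slip: \eqref{e3} contains no $\lambda$-term, so the trace-free hypothesis gives $\Vert\nabla\zeta\Vert^2+\div(\nabla_\zeta\zeta)-\Ric(\zeta,\zeta)=0$ exactly, not ``up to a $\lambda$-term''; this does not affect your conclusion.)

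The genuine gap is the one you flagged yourself: the final claim $\Ric(\zeta,\zeta)=0$. Your integration by parts correctly yields $\int_M\Ric(\zeta,\zeta)=\int_M(\div\zeta)^2\ge 0$, but the ``reverse inequality'' you hope to extract from \eqref{e3}, the traced soliton equation, and Yano's formula is not available: combining Yano's formula with your identity only gives $\int_M\Vert\nabla\zeta\Vert^2=\tfrac12\int_M\Vert\pounds_\zeta g\Vert^2$, which carries no useful sign, and the traced equation only ties $\div\zeta$ to the unknown quantity $\trace(\pounds_\zeta\pounds_\zeta g)$. So your argument stalls at a one-sided integral estimate. The paper's proof of this step is pointwise, not integral: contracting $Q\zeta=-\nabla(\div\zeta)$ with $\zeta$ gives $\Ric(\zeta,\zeta)=-\zeta(\div\zeta)$ at every point; the paper then asserts that, since $\pounds_\zeta\pounds_\zeta g$ is divergence-free and $M$ is connected, $\trace(\pounds_\zeta\pounds_\zeta g)$ is constant, so differentiating the traced soliton equation $\trace(\pounds_\zeta\pounds_\zeta g)+2\lambda\div(\zeta)=n(\mu-r)$ in the direction of $\zeta$ gives $2\lambda\,\zeta(\div\zeta)=-n\zeta(r)-\zeta(\trace(\pounds_\zeta\pounds_\zeta g))=0$, whence $\zeta(\div\zeta)=0$ and therefore $\Ric(\zeta,\zeta)=0$ pointwise. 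The idea you are missing relative to the paper is precisely this pointwise identity $\Ric(\zeta,\zeta)=-\zeta(\div\zeta)$ combined with an argument that $\div\zeta$ is constant along the flow of $\zeta$. (To be fair to your instincts: the paper's claim that divergence-freeness plus connectedness forces the trace to be constant is asserted rather than proved, and it is not a general fact about symmetric $2$-tensors --- the Einstein tensor is divergence-free with generally non-constant trace --- so the delicacy you sensed at this point is real; but as a reconstruction of the paper's argument, this is the step you needed to supply and did not.)
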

\begin{proof}
(i) If $\trace(\pounds_{\zeta}\pounds_{\zeta}g)=0$, by taking the trace into the soliton equation \eqref{e1}, we have
$$2\lambda \div(\zeta)=(\mu-r)n,$$
which, by integration, gives $\int_M(\mu-r)=0$ by means of the divergence theorem. Also, from \eqref{e3}, we have
$$\Vert \nabla {\zeta}\Vert^2=\Ric(\zeta,\zeta)-\div(\nabla_{\zeta}\zeta),$$
hence
$$\int_M\Vert \nabla {\zeta}\Vert^2=\int_M\Ric(\zeta,\zeta)\leq 0,$$
and we get $\nabla \zeta=0$.

(ii) If $\div(\pounds_{\zeta}\pounds_{\zeta}g)=0$, and taking into account that we have (see \cite{a})
$$(\div(\pounds_{\zeta}g))(X)=2X(\div(\zeta))+2\Ric(X,\zeta)$$
for any vector field $X$ tangent to $M$, by taking the divergence into the soliton equation \eqref{e1}, we get
$$\lambda \div(\pounds_{\zeta}g)=-dr,$$
and, from \eqref{e2}, we obtain
$$g(\nabla r,X)=-2\lambda\left(g(\nabla(\div(\zeta)),X)+g(Q\zeta,X)\right)$$
for any vector field $X$ tangent to $M$, and we get
\begin{align}\label{r1}
\nabla r&=-2\lambda\left(\nabla (\div(\zeta))+Q\zeta\right).
\end{align}
In particular, if $r$ is constant, then
$$\lambda (\nabla(\div(\zeta))+Q\zeta)=0,$$
which implies $\lambda=0$ or $Q\zeta=-\nabla (\div(\zeta))$. In the second case,
$$\Ric(\zeta,\zeta)=-\zeta(\div(\zeta)).$$
But since $\div(\pounds_{\zeta}\pounds_{\zeta}g)=0$ and $M$ is connected, $\trace(\pounds_{\zeta}\pounds_{\zeta}g)$ is a constant. By taking the trace in \eqref{e1} and then differentiating it in the direction of $\zeta$, we get
$$\zeta(\trace(\pounds_{\zeta}\pounds_{\zeta}g))+2\lambda \zeta(\div(\zeta))=-n\zeta(r),$$
which gives $\zeta(\div(\zeta))=0$, therefore, $\Ric(\zeta,\zeta)=0$, and the proof is complete.
\end{proof}

\begin{remark}
If $(M,g,\zeta,\lambda,\mu)$ is a hyperbolic Yamabe soliton and if $\div(\pounds_{\zeta}\pounds_{\zeta}g)=0$, then, by taking the divergence in \eqref{r1},
we get
$$\Delta( r)=-2\lambda\left(\Delta (\div(\zeta))+\div(Q\zeta)\right),$$
therefore, the scalar curvature is a harmonic function if and only if $\lambda=0$ or if $\zeta$ satisfies $\Delta (\div(\zeta))=-\div(Q\zeta)$.
\end{remark}

\begin{proposition}
Let $(M^n,g,\zeta,\lambda,\mu)$ be a compact hyperbolic Yamabe soliton with divergence-free potential vector field $\zeta$ and $\lambda\neq 0$. Then
$$\int_M\Vert\pounds_{\zeta}\pounds_{\zeta}g\Vert^2=2\lambda^2\int_M\left(\Vert\nabla\zeta\Vert^2-\Ric(\zeta,\zeta)\right)+n\int_M(\mu-r)^2.$$

Moreover, if one of the following conditions holds:

(i) $\int_M\Vert\pounds_{\zeta}\pounds_{\zeta}g\Vert^2\leq n\int_M(\mu-r)^2$;

(ii) $2\lambda^2\int_M\left(\Ric(\zeta,\zeta)-\Vert\nabla\zeta\Vert^2\right)\geq n\int_M(\mu-r)^2$,\\ then $\zeta$ is a Killing vector field (i.e., $\pounds_{\zeta}g=0$) and the scalar curvature $r$ is constant.
\end{proposition}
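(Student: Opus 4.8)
The plan is to treat the soliton equation \eqref{e1} as an algebraic relation between $\pounds_{\zeta}\pounds_{\zeta}g$ and $\pounds_{\zeta}g$ and then integrate, the only analytic input being a Bochner-type integral identity. First I would rewrite \eqref{e1} as $\pounds_{\zeta}\pounds_{\zeta}g = (\mu-r)g - \lambda\,\pounds_{\zeta}g$ and take the pointwise squared norm of both sides. Expanding the inner product of symmetric $2$-tensors and using $\Vert g\Vert^2 = n$ together with $\langle g,\pounds_{\zeta}g\rangle = \trace(\pounds_{\zeta}g) = 2\div(\zeta) = 0$ (this is where the divergence-free hypothesis enters, killing the cross term), one obtains the pointwise identity $\Vert\pounds_{\zeta}\pounds_{\zeta}g\Vert^2 = n(\mu-r)^2 + \lambda^2\Vert\pounds_{\zeta}g\Vert^2$. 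Integrating over the compact manifold $M$ then yields
$$\int_M\Vert\pounds_{\zeta}\pounds_{\zeta}g\Vert^2 = n\int_M(\mu-r)^2 + \lambda^2\int_M\Vert\pounds_{\zeta}g\Vert^2.$$

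It remains to rewrite $\int_M\Vert\pounds_{\zeta}g\Vert^2$. Writing $(\pounds_{\zeta}g)(X,Y) = g(\nabla_X\zeta,Y) + g(\nabla_Y\zeta,X)$, I would expand $\Vert\pounds_{\zeta}g\Vert^2 = 2\Vert\nabla\zeta\Vert^2 + 2\,\nabla_i\zeta_j\nabla^j\zeta^i$, integrate by parts, and commute the two covariant derivatives via the Ricci identity; using $\div(\zeta) = 0$ and $\int_M\div(\nabla_{\zeta}\zeta) = 0$ on the compact $M$, the mixed term integrates to $-\int_M\Ric(\zeta,\zeta)$, giving the standard formula
$$\int_M\Vert\pounds_{\zeta}g\Vert^2 = 2\int_M\left(\Vert\nabla\zeta\Vert^2 - \Ric(\zeta,\zeta)\right).$$
(Equivalently, this can be recovered by integrating \eqref{e3} and discarding the exact term $\int_M\div(\nabla_{\zeta}\zeta)=0$.) Substituting into the previous display produces precisely the claimed integral formula. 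This Bochner integration by parts is the one genuinely computational step, and the sign bookkeeping when commuting derivatives is the place to be careful; everything else is formal.

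For the supplementary conclusions, I would recast the established identity as
$$\int_M\Vert\pounds_{\zeta}\pounds_{\zeta}g\Vert^2 - n\int_M(\mu-r)^2 = \lambda^2\int_M\Vert\pounds_{\zeta}g\Vert^2,$$
whose right-hand side is nonnegative. Under hypothesis (i) the left-hand side is $\leq 0$, forcing it to vanish; under hypothesis (ii) the right-hand side equals $-2\lambda^2\int_M(\Ric(\zeta,\zeta) - \Vert\nabla\zeta\Vert^2) \leq -n\int_M(\mu-r)^2 \leq 0$, again forcing it to vanish. In both cases $\lambda^2\int_M\Vert\pounds_{\zeta}g\Vert^2 = 0$, and since $\lambda\neq 0$ we conclude $\pounds_{\zeta}g = 0$, i.e., $\zeta$ is Killing. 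Finally, substituting $\pounds_{\zeta}g = 0$ (hence $\pounds_{\zeta}\pounds_{\zeta}g = 0$) back into \eqref{e1} leaves $(\mu-r)g = 0$, so $r = \mu$ is constant, which completes the argument.
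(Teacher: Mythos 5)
Your proposal is correct and follows essentially the same route as the paper's proof: the pointwise Hilbert--Schmidt norm expansion of \eqref{e1} with the cross term killed by $\div(\zeta)=0$, then the integral identity $\int_M\Vert\pounds_{\zeta}g\Vert^2=2\int_M\left(\Vert\nabla\zeta\Vert^2-\Ric(\zeta,\zeta)\right)$, and finally the same nonnegativity argument for (i) and (ii). The only differences are minor: you derive that integral identity by hand (integration by parts plus the Ricci identity) where the paper simply cites Yano's formula \cite{ya}, and in case (ii) you conclude $\pounds_{\zeta}g=0$ directly rather than first obtaining $\pounds_{\zeta}\pounds_{\zeta}g=0$ as the paper does; just note that your parenthetical shortcut of ``integrating \eqref{e3}'' is not immediate as stated, since \eqref{e3} involves $\trace(\pounds_{\zeta}\pounds_{\zeta}g)$ rather than $\Vert\pounds_{\zeta}g\Vert^2$, and identifying the two uses $\div(\zeta)=0$ -- but your main derivation does not rely on this aside.
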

\begin{proof}
Computing the Hilbert--Schmidt norms into the soliton equation \eqref{e1}, we infer
\begin{align*}
\Vert\pounds_{\zeta}\pounds_{\zeta}g\Vert^2&=(\mu-r)^2\Vert g\Vert^2-2(\mu-r)\lambda\langle g,\pounds_{\zeta}g\rangle+\lambda^2\Vert\pounds_{\zeta}g\Vert^2\\
&=n(\mu-r)^2-4(\mu-r)\lambda\div(\zeta)+\lambda^2\Vert\pounds_{\zeta}g\Vert^2\\
&=n(\mu-r)^2+\lambda^2\Vert\pounds_{\zeta}g\Vert^2.
\end{align*}

Since we have \cite{ya}
\begin{align*}
0&=\int_M\left(\Ric(\zeta,\zeta)+\frac{1}{2}\Vert \pounds_{\zeta}g\Vert^2-\Vert \nabla\zeta\Vert^2-(\div(\zeta))^2\right)\\
&=\int_M\left(\Ric(\zeta,\zeta)+\frac{1}{2}\Vert \pounds_{\zeta}g\Vert^2-\Vert \nabla\zeta\Vert^2\right),
\end{align*}
we get
\begin{align*}
\int_M\left(\Vert\pounds_{\zeta}\pounds_{\zeta}g\Vert^2-n(\mu-r)^2\right)&=\lambda^2\int_M\Vert\pounds_{\zeta}g\Vert^2\\
&=2\lambda^2\int_M\left(\Vert \nabla\zeta\Vert^2-\Ric(\zeta,\zeta)\right),
\end{align*}
and we get the required relation.

If $\int_M\Vert\pounds_{\zeta}\pounds_{\zeta}g\Vert^2\leq n\int_M(\mu-r)^2$ and $\lambda \neq 0$, then $\pounds_{\zeta}g=0$.

If $2\lambda^2\int_M\left(\Ric(\zeta,\zeta)-\Vert\nabla\zeta\Vert^2\right)\geq n\int_M(\mu-r)^2$, then $\pounds_{\zeta}\pounds_{\zeta}g=0$, hence, $\lambda \pounds_{\zeta}g=(\mu-r)g$.  By taking the trace, we get $0=2\lambda \div(\zeta)=n(\mu-r)$ and we obtain $r=\mu$ (constant) and $\pounds_{\zeta}g=0$.
\end{proof}

In the particular case when the soliton posseses a $\zeta(\Ric)$-vector field \cite{ri}, we prove the following result.

\begin{proposition}
A closed hyperbolic Yamabe soliton $(M^n,g,\zeta,\lambda,\mu)$ with a $\zeta(\Ric)$-vector field satisfying $\nabla \zeta=aQ$, $a\in \mathbb R^*$, and $\trace(\pounds_{\zeta}\Ric)=0$, such that $n+2a\lambda\neq 0$
is a Ricci-flat manifold and $\zeta$ is a parallel vector field. The converse implication holds true.
\end{proposition}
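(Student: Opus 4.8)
The plan is to convert the structural hypothesis $\nabla\zeta=aQ$ into a statement about $\pounds_\zeta g$ and then feed it into the soliton equation \eqref{e1}. Since $Q$ is self-adjoint with respect to $g$, the formula $(\pounds_\zeta g)(X,Y)=g(\nabla_X\zeta,Y)+g(X,\nabla_Y\zeta)$ immediately gives $\pounds_\zeta g=2a\Ric$. Because $a\in\mathbb R^*$ is a \emph{constant}, differentiating once more yields $\pounds_\zeta\pounds_\zeta g=2a\,\pounds_\zeta\Ric$, so \eqref{e1} becomes
$$2a\left(\pounds_\zeta\Ric+\lambda\Ric\right)=(\mu-r)g.$$

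First I would take the $g$-trace of this identity. Using $\trace\Ric=r$ and the hypothesis $\trace(\pounds_\zeta\Ric)=0$, the left-hand side contributes $2a\lambda r$ and the right-hand side contributes $n(\mu-r)$, whence $(n+2a\lambda)r=n\mu$. Since $n+2a\lambda\neq 0$, this forces $r=\frac{n\mu}{n+2a\lambda}$ to be constant, so $\zeta(r)=0$.

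The crux of the argument, and the step I expect to require the most care, is to extract $\Ric=0$ from the two facts that $r$ is constant and $\trace(\pounds_\zeta\Ric)=0$. The key point is that the metric trace does \emph{not} commute with the Lie derivative: for a symmetric $(0,2)$-tensor one has $\pounds_\zeta(\trace\Ric)=\trace(\pounds_\zeta\Ric)-\langle\pounds_\zeta g,\Ric\rangle$, the correction term coming from $\pounds_\zeta g^{-1}=-g^{-1}(\pounds_\zeta g)g^{-1}$. Substituting $\pounds_\zeta g=2a\Ric$ and $\trace(\pounds_\zeta\Ric)=0$ gives $\zeta(r)=-2a\Vert\Ric\Vert^2$. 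Combined with $\zeta(r)=0$ and $a\neq 0$, this yields $\Vert\Ric\Vert^2=0$, i.e. $\Ric=0$, so the manifold is Ricci-flat. (This is where closedness can instead be invoked: $\div\zeta=\tfrac12\trace(\pounds_\zeta g)=ar$ integrates to zero, forcing $r=0$, and integrating the same identity gives $\int_M\Vert\Ric\Vert^2=0$.) Finally $\Ric=0$ means $Q=0$, hence $\nabla\zeta=aQ=0$ and $\zeta$ is parallel.

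For the converse, suppose $M$ is Ricci-flat with $\zeta$ parallel. Then $Q=0$, so $\nabla\zeta=0=aQ$ holds for any $a\in\mathbb R^*$ and $\trace(\pounds_\zeta\Ric)=0$ trivially; moreover $\pounds_\zeta g=0$ gives $\pounds_\zeta\pounds_\zeta g=0$, so \eqref{e1} reduces to $(\mu-r)g=\mu g=0$, which holds with $\mu=0$. Hence $(M^n,g,\zeta,\lambda,\mu)$ is a hyperbolic Yamabe soliton satisfying all the stated hypotheses, and the equivalence follows.
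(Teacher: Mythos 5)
Your proof is correct, but at the decisive step it takes a genuinely different route from the paper's. You and the paper agree up to the same point: $\pounds_\zeta g=2a\Ric$, hence $\pounds_\zeta\pounds_\zeta g=2a\pounds_\zeta\Ric$, the soliton equation becomes $2a(\pounds_\zeta\Ric+\lambda\Ric)=(\mu-r)g$, and its trace together with $\trace(\pounds_\zeta\Ric)=0$ gives $(n+2a\lambda)r=n\mu$, so $r$ is constant. From there the paper works with integral machinery: it invokes its formulas \eqref{e2} and \eqref{e3} for the divergence and trace of Lie derivatives to show $\Ric(\zeta,\zeta)=0$ and to reach the pointwise identity $a^2\Vert Q\Vert^2+a\div(Q\zeta)=0$, which it then integrates over the closed manifold so that the divergence term drops out and $\int_M\Vert Q\Vert^2=0$ forces $Q=0$. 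You instead exploit the failure of the $g$-trace to commute with $\pounds_\zeta$: the identity $\zeta(r)=\trace(\pounds_\zeta\Ric)-\langle\pounds_\zeta g,\Ric\rangle$, which you justify correctly via $\pounds_\zeta g^{-1}=-g^{-1}(\pounds_\zeta g)g^{-1}$, yields under your hypotheses the pointwise relation $\zeta(r)=-2a\Vert\Ric\Vert^2$, and constancy of $r$ then kills $\Ric$ at every point with no integration whatsoever. The payoff is that your forward implication never uses closedness: compactness enters the paper's argument only through the divergence theorem, whereas in your version it is genuinely redundant (your parenthetical alternative via $\div(\zeta)=ar$ is fine but not needed), so you have in fact proved a marginally stronger statement. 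Your converse is also slightly more explicit than the paper's bare assertion, since you record that $\mu$ is forced to equal $0$, consistent with $r=n\mu/(n+2a\lambda)=0$ in the forward direction.
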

\begin{proof}
We get
$$\div(\zeta)=ar, \ \ \pounds_{\zeta}g=2a\Ric,  \ \ \pounds_{\zeta}\pounds_{\zeta}g=2a\pounds_{\zeta}\Ric$$
and the soliton equation becomes
$$2a\pounds_{\zeta}\Ric+2a\lambda \Ric=(\mu-r)g.$$
Since $\trace(\pounds_{\zeta}\Ric)=0$, we get $(n+2a\lambda) r=n\mu$. It follows that $r$ is a constant provided that $n+2a\lambda\neq 0$. Since $\trace(\pounds_{\zeta}\pounds_{\zeta}g)=0$, from \eqref{e3}, we obtain
$$\Vert \nabla {\zeta}\Vert^2+\div(\nabla_{\zeta}\zeta)-\Ric(\zeta,\zeta)=0,$$
and from \eqref{e2}, we get
$$\Ric(\zeta,\zeta)=\frac{1}{2}\left(\div(\pounds_{\zeta}g)\right)(\zeta)-\zeta(\div(\zeta))=0.$$
Since
$$\Vert \nabla {\zeta}\Vert^2=a^2\Vert Q\Vert^2, \ \ \div(\nabla_{\zeta}\zeta)=a \div(Q\zeta),$$
we infer
\begin{align}\label{e4}
a^2\Vert Q\Vert^2+a \div(Q\zeta)&=0.
\end{align}
By integrating \eqref{e4}, we get $Q=0$, then $\nabla \zeta=0$, hence we get the conclusion.
\end{proof}

\section{Hyperbolic Yamabe soliton submanifolds}

Let $\bar M$ be a smooth manifold with a Riemannian metric $\bar g$, and let $M$ be an isometrically immersed submanifold of it with the induced metric $g$.
For any vector fields $X,Y$ tangent to $M$, and any normal vector field $V$, the Gauss and Weingarten equations are
$$\bar \nabla_XY=\nabla_XY+h(X,Y), \ \ \bar \nabla_V X=-A_VX+\nabla^{\bot}_VX,$$
where $\bar \nabla$ and $\nabla$ are the Levi-Civita connections of $\bar g$ and $g$, $h$ is the second fundamental form, $A$ is the shape operator, and $\nabla^{\bot}$ is the normal connection.

\medskip

We shall further assume that $M$ is a hyperbolic Yamabe soliton having as potential vector field the tangential component $\zeta^{\top}$ of a concurrent vector field $\zeta$ on the manifold $(\bar M, \bar g)$. Then, $\bar \nabla \zeta=I$, where $I$ is the identity map, and for any vector fields $X,Y$ tangent to $M$, we have \cite{ac4}
\begin{align*}
\nabla_X\zeta^{\top}&=X+A_{\zeta^{\top}}X,\\
(\pounds_{\zeta^{\top}}g)(X,Y)&=2\Big(g(X,Y)+g(A_{\zeta^{\bot}}X,Y)\Big),\\
(\pounds_{\zeta^{\top}}\pounds_{\zeta^{\top}}g)(X,Y)&=2\Big(2g(X,Y)+4g(A_{\zeta^{\bot}}X,Y)+2g(A_{\zeta^{\bot}}^2X,Y)\\
&\hspace{12pt}+g((\nabla_{\zeta^{\top}}A_{\zeta^{\bot}})X,Y)\Big).
\end{align*}

Let us recall \cite{cn, k} that a hypersurface is called a \textit{metallic shaped hypersurface} if its shape operator $A$ satisfies
$$A^2=aA+bI,$$
for some real numbers $a$ and $b$. It is know that, in the case of a hypersurface of a space of constant curvature, if the above relation is satisfied in a point, then the hypersurface is pseudosymmetric (for more details, see \cite{Deszcz, DVY}).

Now we can state the following results.

\begin{theorem}
If $(M,g,\zeta^{\top},\lambda,\mu)$ is a hyperbolic Yamabe soliton hypersurface with parallel shape operator (i.e., $\nabla A_{\zeta^{\bot}}=0$), then it is a metallic shaped hypersurface if and only if its scalar curvature is constant.
\end{theorem}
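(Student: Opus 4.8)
The plan is to feed the two Lie-derivative expressions recorded above directly into the soliton equation \eqref{e1}, written for the potential field $\zeta^{\top}$, and then read off a polynomial identity for the shape operator. Abbreviate $A:=A_{\zeta^{\bot}}$. Since the hypothesis is $\nabla A=0$, the term $g((\nabla_{\zeta^{\top}}A)X,Y)$ in the formula for $\pounds_{\zeta^{\top}}\pounds_{\zeta^{\top}}g$ drops out, so that, for all vector fields $X,Y$ tangent to $M$,
\begin{align*}
(\pounds_{\zeta^{\top}}\pounds_{\zeta^{\top}}g)(X,Y)&=4g(X,Y)+8g(AX,Y)+4g(A^2X,Y),\\
\lambda(\pounds_{\zeta^{\top}}g)(X,Y)&=2\lambda g(X,Y)+2\lambda g(AX,Y).
\end{align*}
Substituting these into \eqref{e1} and collecting terms gives, for all $X,Y$,
\begin{equation*}
4g(A^2X,Y)+(8+2\lambda)g(AX,Y)+(4+2\lambda-\mu+r)g(X,Y)=0.
\end{equation*}

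Because $g$ is non-degenerate, the vanishing of this bilinear form for every $X,Y$ is equivalent to the operator identity $4A^2+(8+2\lambda)A+(4+2\lambda-\mu+r)I=0$, that is,
\begin{equation*}
A^2=aA+bI,\qquad a:=-\frac{4+\lambda}{2},\quad b:=\frac{\mu-r-4-2\lambda}{4}.\tag{$\ast$}
\end{equation*}
Here $a$ depends only on the soliton constant $\lambda$, hence is automatically a real constant, whereas $b$ is an affine function of the scalar curvature $r$. The forward implication is now immediate: if $r$ is constant then $b$ is constant, and $(\ast)$ exhibits $A$ as a metallic-shaped operator.

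For the converse I would argue directly from $(\ast)$. Suppose $M$ is metallic shaped, so that $A^2=a'A+b'I$ for some real constants $a',b'$. Subtracting this from $(\ast)$ yields $(a-a')A=(b'-b)I$. If $a\neq a'$, this forces $A=\phi I$ with $\phi:=(b'-b)/(a-a')$ a smooth function; but then the parallelism of $A$ gives $(\nabla_X A)Y=(X\phi)Y=0$ for all $X,Y$, so $\phi$, and hence $b$, is constant on the (connected) manifold $M$, whence $r$ is constant. If instead $a=a'$, then $(b-b')I=0$ forces $b=b'$ constant, again giving $r$ constant.

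I expect the only genuinely delicate point to be this converse. The naive reading of $(\ast)$ — ``$A^2=aA+bI$ with $a$ constant, so $A$ is metallic exactly when $b$ is constant'' — is valid only when the quadratic relation satisfied by $A$ is essentially unique, which can fail precisely at points where $A$ is a scalar multiple of the identity, since there the coefficients in such a relation are no longer determined. The parallelism hypothesis $\nabla A_{\zeta^{\bot}}=0$ is exactly what removes this ambiguity, by upgrading any pointwise-scalar $A=\phi I$ to a genuinely constant multiple of the identity; without it the equivalence could break down.
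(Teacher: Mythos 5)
Your proof is correct and follows essentially the same route as the paper: both substitute the formulas for $\pounds_{\zeta^{\top}}g$ and $\pounds_{\zeta^{\top}}\pounds_{\zeta^{\top}}g$ (with the $\nabla_{\zeta^{\top}}A_{\zeta^{\bot}}$ term killed by parallelism) into the soliton equation and arrive at the identical relation $A_{\zeta^{\bot}}^2=-\frac{\lambda+4}{2}A_{\zeta^{\bot}}-\frac{2\lambda-\mu+r+4}{4}I$, from which the forward implication is immediate. The only difference is that the paper simply asserts the conclusion at this point, whereas you additionally justify the converse (handling the possibility that the metallic relation holds with a different pair of constants, resolved via $\nabla A_{\zeta^{\bot}}=0$), a detail the paper leaves implicit.
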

\begin{proof}
From the soliton equation \eqref{e1} and by using the above relations, we get
$$(2\lambda+4)g(X,Y)+(2\lambda+8)g(A_{\zeta^{\bot}}X,Y)+4g(A_{\zeta^{\bot}}^2X,Y)=(\mu-r)g(X,Y)$$
for any vector fields $X,Y$ tangent to $M$, hence
$$A_{\zeta^{\bot}}^2=-\frac{\lambda+4}{2}A_{\zeta^{\bot}}-\frac{2\lambda-\mu+r+4}{4}I,$$
and we obtain the conclusion.
\end{proof}

Based on the results from \cite{DVY}, we have
\begin{proposition}
If $(M,g,\zeta^{\top},\lambda,\mu)$ is a hyperbolic Yamabe soliton hypersurface isometrically immersed into a space of constant curvature and having parallel shape operator, then it is a pseudosymmetric hypersurface.
\end{proposition}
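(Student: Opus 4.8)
The plan is to reduce the statement to the pointwise quadratic identity for the shape operator that was already extracted in the proof of the preceding Theorem, and then to invoke the characterization of pseudosymmetry recalled just before that Theorem.

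First I would recall that, under the standing hypotheses of a hyperbolic Yamabe soliton hypersurface with parallel shape operator $\nabla A_{\zeta^{\bot}}=0$, the soliton equation \eqref{e1} combined with the expressions for $\pounds_{\zeta^{\top}}g$ and $\pounds_{\zeta^{\top}}\pounds_{\zeta^{\top}}g$ listed above yields, at every point of $M$,
$$A_{\zeta^{\bot}}^2=-\frac{\lambda+4}{2}A_{\zeta^{\bot}}-\frac{2\lambda-\mu+r+4}{4}I.$$
This is precisely a relation of the form $A^2=aA+bI$, with $a=-\frac{\lambda+4}{2}\in\mathbb R$ a fixed constant and $b=-\frac{2\lambda-\mu+r+4}{4}$ a real-valued function on $M$. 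Since $M$ is isometrically immersed into a space of constant curvature, I can then apply the recalled result from \cite{Deszcz, DVY}, according to which a hypersurface of a space of constant curvature whose shape operator satisfies such a quadratic relation at a point is pseudosymmetric there; as the relation holds at every point of $M$, the hypersurface is pseudosymmetric.

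The place where one must be careful — and which I regard as the only genuine subtlety — is the interplay between the pointwise and the global nature of the coefficients. In the Theorem the \emph{metallic shaped} condition is understood with globally constant coefficients $a,b$, which forces $r$ to be constant; here, by contrast, pseudosymmetry is a pointwise property, so I do not need $b$ to be a global constant. It suffices that at each $p\in M$ the shape operator satisfies $A^2=aA+b(p)I$ with $a,b(p)\in\mathbb R$: the coefficient $a$ is automatically the same real constant at all points, while $b$ may vary with the point, and the cited theorem requires only the pointwise validity of the quadratic identity to conclude pseudosymmetry. Thus no constancy of $r$ is assumed, and the result follows directly from the preceding computation together with the quoted fact.
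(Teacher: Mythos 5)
Your proof is correct and is exactly the argument the paper intends: the paper states this proposition without a separate proof, relying precisely on the quadratic identity for $A_{\zeta^{\bot}}$ obtained in the proof of the preceding Theorem together with the pointwise pseudosymmetry criterion for hypersurfaces of spaces of constant curvature recalled from \cite{Deszcz, DVY}. Your remark that the coefficient $b=-\frac{2\lambda-\mu+r+4}{4}$ need only be a real number at each point, so that no constancy of $r$ is required (in contrast with the globally constant coefficients in the metallic shaped condition), is the correct reading of why this proposition needs weaker hypotheses than the metallic shaped conclusion.
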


\begin{proposition}
Let $(M,g,\zeta^{\top},\lambda,\mu)$ be a $\zeta^{\top}$-totally umbilical hyperbolic Yamabe soliton with $A_{\zeta^{\bot}}=fI$, for $f$ a smooth function on $M$. If $f$ is constant on the integral curves of $\zeta^{\top}$, then $f$ is constant on the integral curves of $r$.
\end{proposition}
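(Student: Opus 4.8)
The plan is to substitute the umbilical hypothesis $A_{\zeta^{\bot}}=fI$ directly into the two Lie-derivative formulas recorded above for an immersed hyperbolic Yamabe soliton with potential field $\zeta^{\top}$, thereby collapsing the tensorial soliton equation \eqref{e1} into a single scalar identity linking $r$, $f$, and $\zeta^{\top}(f)$. First I would note that $g(A_{\zeta^{\bot}}X,Y)=f\,g(X,Y)$ and $g(A_{\zeta^{\bot}}^2X,Y)=f^2 g(X,Y)$, so the purely algebraic terms of each formula are immediate scalar multiples of $g(X,Y)$.

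The key step is the covariant-derivative term $(\nabla_{\zeta^{\top}}A_{\zeta^{\bot}})X$. Since $A_{\zeta^{\bot}}=fI$, the Leibniz rule gives $(\nabla_{\zeta^{\top}}A_{\zeta^{\bot}})X=\nabla_{\zeta^{\top}}(fX)-f\nabla_{\zeta^{\top}}X=(\zeta^{\top}(f))X$, whence $g((\nabla_{\zeta^{\top}}A_{\zeta^{\bot}})X,Y)=\zeta^{\top}(f)\,g(X,Y)$. Feeding these into the expressions for $\pounds_{\zeta^{\top}}g$ and $\pounds_{\zeta^{\top}}\pounds_{\zeta^{\top}}g$ and inserting the result into \eqref{e1}, every term becomes a scalar multiple of $g(X,Y)$; cancelling $g(X,Y)$ then yields the scalar equation
$$r=\mu-2\lambda-4-2(\lambda+4)f-4f^2-2\zeta^{\top}(f).$$

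Finally I would differentiate this identity along $\zeta^{\top}$. Applying $\zeta^{\top}$ to both sides expresses $\zeta^{\top}(r)$ as a combination of $\zeta^{\top}(f)$, $f\,\zeta^{\top}(f)$, and $\zeta^{\top}(\zeta^{\top}(f))$. The hypothesis that $f$ is constant along the integral curves of $\zeta^{\top}$ means precisely that the function $\zeta^{\top}(f)$ vanishes identically on $M$; consequently its own derivative along $\zeta^{\top}$ vanishes as well, $\zeta^{\top}(\zeta^{\top}(f))=0$, and every term on the right-hand side drops out. This forces $\zeta^{\top}(r)=0$, i.e., $r$ is constant along the integral curves of $\zeta^{\top}$, which is the asserted conclusion. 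The only delicate point — the ``hard part'' — is recognizing that the identical vanishing of the function $\zeta^{\top}(f)$ automatically annihilates the second-order term $\zeta^{\top}(\zeta^{\top}(f))$, so that the constancy of $r$ follows with no additional regularity or integrability assumption.
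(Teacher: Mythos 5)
Your proposal is correct and follows essentially the same route as the paper: substituting $A_{\zeta^{\bot}}=fI$ (with $(\nabla_{\zeta^{\top}}A_{\zeta^{\bot}})X=\zeta^{\top}(f)X$) into the Lie-derivative formulas to collapse \eqref{e1} into the scalar identity $4f^2+2(\lambda+4)f+2\zeta^{\top}(f)+2\lambda-\mu+r+4=0$, then differentiating along $\zeta^{\top}$ and using that $\zeta^{\top}(f)\equiv 0$ forces $\zeta^{\top}(\zeta^{\top}(f))=0$, whence $\zeta^{\top}(r)=0$. Your scalar equation is exactly the paper's, merely solved for $r$, and the final observation about the second-order term is also the step the paper's proof relies on.
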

\begin{proof}
In this case,
$$A_{\zeta^{\bot}}^2X=f^2X, \ \ (\nabla_{\zeta^{\top}}A_{\zeta^{\bot}})X=\zeta^{\top}(f)X$$
for any vector field $X$ tangent to $M$, and we get
$$4f^2+2(\lambda+4)f+2\zeta^{\top}(f)+2\lambda-\mu+r+4=0,$$
which, by differentiating in the direction of $\zeta^{\top}$, implies
$$\zeta^{\top}(r)=-2\zeta^{\top}(\zeta^{\top}(f))-2(4f+\lambda+4)\zeta^{\top}(f)=0,$$
hence we get the conclusion.
\end{proof}

As a consequence for \textit{totally geodesic submanifolds} (i.e., submanifolds with vanishing shape operator), we deduce

\begin{theorem}
A totally geodesic submanifold is a hyperbolic Yamabe soliton with potential vector field $\zeta^{\top}$ if and only if it has constant scalar curvature.
\end{theorem}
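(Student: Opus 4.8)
The plan is to substitute the defining condition of total geodesy directly into the two displayed Lie-derivative formulas that precede the statement. Recall that a submanifold is totally geodesic precisely when its second fundamental form $h$ vanishes, which is equivalent to $A_V=0$ for every normal vector field $V$; in particular the endomorphism $A_{\zeta^{\bot}}$ is identically zero on $M$. First I would use this to simplify the two formulas: since $A_{\zeta^{\bot}}=0$ everywhere, both its square $A_{\zeta^{\bot}}^2$ and its covariant derivative $\nabla_{\zeta^{\top}}A_{\zeta^{\bot}}$ vanish identically, so the expressions collapse to $\pounds_{\zeta^{\top}}g=2g$ and $\pounds_{\zeta^{\top}}\pounds_{\zeta^{\top}}g=4g$.

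Next I would insert these into the soliton equation \eqref{e1}. This turns \eqref{e1} into the pointwise identity $(4+2\lambda)g=(\mu-r)g$; since $g$ is nondegenerate, this is equivalent to the scalar relation $r=\mu-2\lambda-4$. Because $\lambda$ and $\mu$ are real constants, this forces $r$ to be constant, which gives the forward implication.

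For the converse, suppose $r$ is constant. Then I would simply choose real constants $\lambda,\mu$ satisfying $\mu-2\lambda-4=r$ (for instance $\lambda=0$ and $\mu=r+4$); with $A_{\zeta^{\bot}}=0$ the computation above runs in reverse and shows that \eqref{e1} holds identically, so $(M,g,\zeta^{\top},\lambda,\mu)$ is a hyperbolic Yamabe soliton. This closes the equivalence.

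The argument is essentially a direct substitution, so I do not expect any serious obstacle; the only point requiring a moment's care is the vanishing of the derivative term $\nabla_{\zeta^{\top}}A_{\zeta^{\bot}}$. This is justified not by a separate computation but by observing that $A_V=0$ for every normal $V$ at every point, so $A_{\zeta^{\bot}}$ is the identically zero operator field and hence all of its covariant derivatives vanish automatically.
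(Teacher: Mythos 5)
Your proof is correct and follows essentially the same route as the paper: the theorem there is stated as a direct consequence of the displayed formulas for $\pounds_{\zeta^{\top}}g$ and $\pounds_{\zeta^{\top}}\pounds_{\zeta^{\top}}g$, specialized to $A_{\zeta^{\bot}}=0$, which reduces the soliton equation \eqref{e1} to $(4+2\lambda)g=(\mu-r)g$ and hence to the constancy relation $r=\mu-2\lambda-4$, exactly as you derive. Your explicit treatment of the converse (choosing $\lambda,\mu$ with $\mu-2\lambda-4=r$) and of the vanishing of $\nabla_{\zeta^{\top}}A_{\zeta^{\bot}}$ fills in details the paper leaves implicit, but introduces no new idea.
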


\begin{proposition}
If the hyperbolic Yamabe soliton $(M^n,g,\zeta^{\top},\lambda,\mu)$ is a compact minimal submanifold with $\lambda \neq 0$ and $\pounds_{\zeta^{\top}}\pounds_{\zeta^{\top}}g$ is divergence-free, then
$$\int_M\Vert A_{\zeta^{\bot}}\Vert^2=\int_M\left(n(n-1)+\frac{\zeta^{\top}(r)}{2\lambda}\right).$$
\end{proposition}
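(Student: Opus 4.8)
The plan is to feed the minimal-submanifold reductions of the relevant geometric quantities into the integral identity of Yano already used in the proof of Proposition~2.3, and then to eliminate the resulting Ricci term by invoking Proposition~2.2(ii), which the divergence-free hypothesis makes available.

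First I would record the consequences of minimality. Since the mean curvature vector vanishes, $\trace A_{\zeta^{\bot}}=0$, and from $\nabla_X\zeta^{\top}=X+A_{\zeta^{\bot}}X$ we have $\nabla\zeta^{\top}=I+A_{\zeta^{\bot}}$ as a self-adjoint endomorphism. Hence
$$\div(\zeta^{\top})=n+\trace A_{\zeta^{\bot}}=n,\qquad \Vert\nabla\zeta^{\top}\Vert^2=\trace\big((I+A_{\zeta^{\bot}})^2\big)=n+\Vert A_{\zeta^{\bot}}\Vert^2,$$
the cross term $2\trace A_{\zeta^{\bot}}$ dropping out. Likewise, from $(\pounds_{\zeta^{\top}}g)(X,Y)=2\big(g(X,Y)+g(A_{\zeta^{\bot}}X,Y)\big)$ and $\trace A_{\zeta^{\bot}}=0$, a direct Hilbert--Schmidt computation gives $\Vert\pounds_{\zeta^{\top}}g\Vert^2=4n+4\Vert A_{\zeta^{\bot}}\Vert^2$.

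Next I would apply the identity from \cite{ya} to $\zeta^{\top}$ on the compact manifold $M$,
$$0=\int_M\Big(\Ric(\zeta^{\top},\zeta^{\top})+\tfrac12\Vert\pounds_{\zeta^{\top}}g\Vert^2-\Vert\nabla\zeta^{\top}\Vert^2-(\div(\zeta^{\top}))^2\Big).$$
Substituting the three reductions above together with $(\div(\zeta^{\top}))^2=n^2$ collapses the integrand to $\Ric(\zeta^{\top},\zeta^{\top})+\Vert A_{\zeta^{\bot}}\Vert^2+n-n^2$, so that
$$\int_M\Vert A_{\zeta^{\bot}}\Vert^2=\int_M n(n-1)-\int_M\Ric(\zeta^{\top},\zeta^{\top}).$$

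Finally I would dispose of the Ricci term. Because $\div(\zeta^{\top})=n$ is constant, $\nabla(\div(\zeta^{\top}))=0$, and the divergence-free hypothesis on $\pounds_{\zeta^{\top}}\pounds_{\zeta^{\top}}g$ lets me apply \eqref{r1} of Proposition~2.2(ii) to $\zeta^{\top}$, giving $Q\zeta^{\top}=-\tfrac{1}{2\lambda}\nabla r$, where $\lambda\neq0$ is essential. Pairing with $\zeta^{\top}$ yields $\Ric(\zeta^{\top},\zeta^{\top})=-\tfrac{1}{2\lambda}\zeta^{\top}(r)$, and inserting this into the displayed identity produces the claimed formula. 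I expect the only delicate point to be the careful bookkeeping of the cross terms in $\Vert\nabla\zeta^{\top}\Vert^2$ and $\Vert\pounds_{\zeta^{\top}}g\Vert^2$ under minimality, and noticing that the constancy of $\div(\zeta^{\top})$ is exactly what makes \eqref{r1} simplify to a clean Ricci identity; everything else is a routine assembly.
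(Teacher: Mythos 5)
Your proposal is correct and follows essentially the same route as the paper's own proof: the minimality reductions of $\Vert\pounds_{\zeta^{\top}}g\Vert^2$, $\Vert\nabla\zeta^{\top}\Vert^2$, and $(\div(\zeta^{\top}))^2$ are fed into Yano's integral formula, and the divergence-free hypothesis together with $\lambda\neq 0$ and the constancy of $\div(\zeta^{\top})$ yields $\Ric(\zeta^{\top},\zeta^{\top})=-\zeta^{\top}(r)/(2\lambda)$. The only cosmetic difference is that you invoke the previously established identity \eqref{r1} (which sits in Proposition 2.1(ii), not 2.2(ii) as you cite it) rather than re-deriving that identity from \eqref{e2} as the paper does inline; the underlying computation is identical.
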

\begin{proof}
Since $M$ is compact, we have \cite{ya}
$$\int_M\left(\Ric(\zeta^{\top},\zeta^{\top})+\frac{1}{2}\Vert \pounds_{\zeta^{\top}}g\Vert^2-\Vert \nabla\zeta^{\top}\Vert^2-(\div(\zeta^{\top}))^2\right)=0.$$
By direct computations, we get
\begin{align*}
\Vert \pounds_{\zeta^{\top}}g\Vert^2&=4\left(\Vert A_{\zeta^{\bot}}\Vert^2+2\trace(A_{\zeta^{\bot}})+n\right)=4\left(\Vert A_{\zeta^{\bot}}\Vert^2+n\right)\\
\Vert \nabla\zeta^{\top}\Vert^2&=\Vert A_{\zeta^{\bot}}\Vert^2+2\trace(A_{\zeta^{\bot}})+n=\Vert A_{\zeta^{\bot}}\Vert^2+n\\
(\div(\zeta^{\top}))^2&=\left(\trace(A_{\zeta^{\bot}})\right)^2+2n\trace(A_{\zeta^{\bot}})+n^2=n^2
\end{align*}
since $M$ is a minimal submanifold.
Also, from $\div(\pounds_{\zeta^{\top}}\pounds_{\zeta^{\top}}g)=0$ and $\lambda \neq 0$, we have $\div(\pounds_{\zeta^{\top}}g)=-\displaystyle\frac{1}{\lambda}dr$, and from \eqref{e2}, we obtain
$$\Ric(\zeta^{\top},\zeta^{\top})=-\frac{\zeta^{\top}(r)}{2\lambda},$$
hence we get the conclusion.
\end{proof}

\begin{proposition}\label{hj}
If the gradient hyperbolic Yamabe soliton $(M^n,g,\zeta^{\top},\lambda,\mu)$ is a minimal submanifold with $\zeta^{\top}=\nabla f$, $\lambda \neq 0$ and $\pounds_{\zeta^{\top}}\pounds_{\zeta^{\top}}g$ is divergence-free, then
$$\frac{1}{2}\Delta(\Vert \nabla f\Vert^2)=
\Vert A_{\nabla f}\Vert^2+n-\frac{g(\nabla f,\nabla r)}{2\lambda}.$$

Moreover, if $M$ is closed,
then $$\int_M\Vert A_{\nabla f}\Vert^2=\frac{1}{2\lambda}\int_Mg(\nabla f,\nabla r).$$
\end{proposition}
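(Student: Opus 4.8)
The plan is to convert the soliton hypothesis into a pointwise identity for $f$ through the Bochner formula, using the concurrent-field relation to pin down $\Hess f$. Since $\zeta^{\top}=\nabla f$, the relation $\nabla_X\zeta^{\top}=X+A_{\zeta^{\bot}}X$ recorded above says, reading both sides as symmetric endomorphisms of $TM$, that
\begin{equation*}
\Hess f=\nabla\nabla f=I+A_{\zeta^{\bot}},
\end{equation*}
where here and below $A_{\nabla f}$ abbreviates $A_{\zeta^{\bot}}$. Establishing this identification cleanly is the one genuinely structural step; everything after it is bookkeeping built on minimality.

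First I would write the Bochner formula for $f$,
\begin{equation*}
\tfrac12\Delta\bigl(\Vert\nabla f\Vert^2\bigr)=\Vert\Hess f\Vert^2+g\bigl(\nabla f,\nabla(\Delta f)\bigr)+\Ric(\nabla f,\nabla f),
\end{equation*}
and simplify each term. Because $M$ is minimal, $\trace A_{\zeta^{\bot}}=0$; squaring $\Hess f=I+A_{\zeta^{\bot}}$ and taking traces then gives $\Vert\Hess f\Vert^2=\trace\bigl((I+A_{\zeta^{\bot}})^2\bigr)=n+2\trace A_{\zeta^{\bot}}+\Vert A_{\zeta^{\bot}}\Vert^2=n+\Vert A_{\nabla f}\Vert^2$ and $\Delta f=\trace\bigl(I+A_{\zeta^{\bot}}\bigr)=n$. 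Since $\Delta f$ is constant, the middle term $g(\nabla f,\nabla(\Delta f))$ drops out.

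For the curvature term I would import the computation used in the preceding proposition: from $\div(\pounds_{\zeta^{\top}}\pounds_{\zeta^{\top}}g)=0$ and $\lambda\neq0$ one gets $\div(\pounds_{\zeta^{\top}}g)=-\tfrac1\lambda\,dr$, and evaluating \eqref{e2} at $\zeta^{\top}$---where $\zeta^{\top}(\div\zeta^{\top})=0$ because $\div\zeta^{\top}=n$ is constant---yields $\Ric(\nabla f,\nabla f)=-g(\nabla f,\nabla r)/(2\lambda)$. Substituting the three evaluations into the Bochner formula produces exactly the asserted pointwise identity.

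For the closed case I would integrate the pointwise identity over $M$: the left-hand side is a pure divergence, hence integrates to zero by the divergence theorem, and collecting the surviving terms gives the asserted integral identity relating $\int_M\Vert A_{\nabla f}\Vert^2$ and $\tfrac1{2\lambda}\int_M g(\nabla f,\nabla r)$. I expect the only real obstacle to be the first paragraph---justifying $\Hess f=I+A_{\zeta^{\bot}}$ from the Gauss--Weingarten splitting of the concurrent field and confirming the self-adjointness of the shape operator, so that $\Vert\Hess f\Vert^2=\trace\bigl((I+A_{\zeta^{\bot}})^2\bigr)$. Once that and the minimality trace identities are in hand, the remaining steps are routine.
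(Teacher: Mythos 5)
Your derivation of the pointwise identity is correct and is essentially the paper's own argument: both rest on the Bochner formula applied to $\zeta^{\top}=\nabla f$, with $\Hess f=I+A_{\zeta^{\bot}}$ coming from the concurrent-field relation, minimality killing $\trace(A_{\zeta^{\bot}})$, and the divergence-free hypothesis together with \eqref{e2} giving $\Ric(\nabla f,\nabla f)=-g(\nabla f,\nabla r)/(2\lambda)$. Up to that point there is nothing to object to.

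The gap is in the final integration step. Integrating your pointwise identity over a closed $M$ gives
$$0=\int_M\Vert A_{\nabla f}\Vert^2+n\vol-\frac{1}{2\lambda}\int_Mg(\nabla f,\nabla r),$$
so ``collecting the surviving terms'' as you describe it produces an identity with an extra $n\vol$ on the left-hand side, not the asserted one: the constant $n$ does not integrate away on its own. The missing observation --- the one the paper explicitly flags with ``taking into account that $n=\Delta(f)$'' --- is that the $n$ appearing in the identity is not merely a constant but equals $\Delta f$ pointwise (you established this yourself when you computed $\Delta f=\trace(I+A_{\zeta^{\bot}})=n$), and therefore $\int_M n=\int_M\Delta f=0$ by the divergence theorem. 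Only after this substitution does the $n$-term vanish and the asserted identity $\int_M\Vert A_{\nabla f}\Vert^2=\frac{1}{2\lambda}\int_Mg(\nabla f,\nabla r)$ follow. (Incidentally, this step also forces $n\vol=0$, so the closed case is vacuous in substance; but that is a feature of the proposition as stated, shared with the paper, and the formal argument is the one above.)
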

\begin{proof}
Using the Bochner's formula
$$\frac{1}{2}\Delta(\Vert \zeta^{\top}\Vert^2)=\Vert \nabla\zeta^{\top}\Vert^2+\zeta^{\top}(\div(\zeta^{\top}))+\Ric(\zeta^{\top},\zeta^{\top}),$$
and considering the above computations, we get the first assertion.

If $M$ is closed, by integrating this relation and taking into account that $n=\Delta(f)$, we deduce the second relation,
hence we get the conclusion.
\end{proof}

\begin{remark}
Under the hypotheses of Proposition \ref{hj}, if $M$ is closed and $\lambda g(\nabla f,\nabla r)\leq 0$, then $M$ is a totally geodesic submanifold and $\Vert \nabla f\Vert^2$ is a subharmonic function (i.e., $\Delta(\Vert \nabla f\Vert^2)\geq 0$).
\end{remark}

\section{Factor manifolds of warped products as \\ hyperbolic Yamabe solitons}

We shall further determine the condition for a factor manifold of a manifold of warped product-type to be a hyperbolic Yamabe soliton having as a potential vector field the corresponding component of a $2$-Killing vector field on the manifold. We recall \cite{op} that a \textit{$2$-Killing vector field} $\zeta$ on a Riemannian manifold $(M,g)$ is a smooth vector field which satisfies $\pounds_{\zeta}\pounds_{\zeta}g=0$. We mention that characterizations of $2$-Killing vector fields on manifolds of warped product-type have been recently given by the present authors in \cite{aeq, ac0, mult} and for the warped product case, by Shenawy and \"{U}nal in \cite{sh}.

We will briefly recall the definitions of the warped product, doubly warped product, multiply warped product, and multiply twisted product manifolds. Let $(M_i,g_i)$, $i\in \{1,2\}$, be two Riemannian manifolds, let $M:=M_1\times M_2$, let $\pi:M\rightarrow M_1$ be the canonical projection, and let
$f:M_1\rightarrow \mathbb R_+^*$ be a smooth function. In 1969, Bishop and O'Neill \cite{Bi} introduced the notion of \textit{warped product manifold} as being a Riemannian manifold
$$(M,\bar g)=:M_1\times _f M_2 \ \ \textrm{with} \ \ \bar g=\pi_{1}^{*}(g_1)+(\pi_1^*(f))^2 \pi_{2}^{*}(g_2).$$
In 1974, Ehrlich \cite{erlich} defined the concept of \textit{doubly warped product manifold} as being a Riemannian manifold
$$(M,\bar g)=:_{f_2}M_1\times_{f_1}  M_2 \ \ \textrm{with} \ \ \bar g=(\pi_2^*(f_2))^2 \pi_{1}^{*}(g_1)+(\pi_1^*(f_1))^2 \pi_{2}^{*}(g_2),$$ where $f_1:M_1\rightarrow \mathbb R_+^*$ and $f_2:M_2\rightarrow \mathbb R_+^*$ are smooth functions.

This notions have been later extended to a larger number of manifolds.
Let $(M_i,g_i)$, $i\in \{1,\dots,n\}$, be Riemannian manifolds, let $M:=M_1\times \dots \times M_n$, and let $\pi_i:M\rightarrow M_i$, $i\in \{1,\dots,n\}$, be the canonical projections. Then:

(1) $(M,\bar g)=:M_1\!\times_{{f_2}}\!\!M_2\times \cdots\times_{f_n}\!\!M_n$ is called a \textit{multiply warped product manifold} \cite{unal} if
$$\bar g=\pi_1^*(g_1)+\sum_{i=2}^n\left(\pi_{1,i}^*(f_i)\right)^2\pi_i^*(g_i),$$
where $\pi_{1,i}:M\rightarrow M_1\times M_i$ is the canonical projection and $f_i:M_1\rightarrow \mathbb R_+^*$ is a smooth function, $i\in\{2,\dots,n\}$;

(2) $(M,\bar g)=:M_1\!\times_{{f_2}}\!\!M_2\times \cdots\times_{f_n}\!\!M_n$ is called a \textit{multiply twisted product manifold} \cite{wang} if
$$\bar g=\pi_1^*(g_1)+\sum_{i=2}^n\left(\pi_{1,i}^*(f_i)\right)^2\pi_i^*(g_i),$$
where $\pi_{1,i}:M\rightarrow M_1\times M_i$ is the canonical projection and $f_i:M_1\times M_i\rightarrow \mathbb R_+^*$ is a smooth function, $i\in\{2,\dots,n\}$.

In the entire section we shall use the same notation for a function on a factor manifold and for a metric on a factor manifold and their pullbacks
on the product space.

Let $\zeta=\sum_{i=1}^n\zeta_i$, $X=\sum_{i=1}^nX_i$, $Y=\sum_{i=1}^nY_i$ be vector fields tangent to $M$, where $\zeta_i,X_i,Y_i$ are the lifts of some vector fields tangent to $M_i$ (denoted in the same way).

\begin{proposition}\label{p1}
If $\zeta$ is a $2$-Killing vector field on the multiply twisted product manifold $M_1\!\times_{{f_2}}\!\!M_2\times \cdots\times_{f_n}\!\!M_n$, then:

(i) $(M_1,g_1,\zeta_1)$ is a hyperbolic Yamabe soliton if and only if there exist $\lambda,\mu\in \mathbb{R}$ such that
$$\pounds_{\zeta_1}g_1=\frac{\mu-r_1}{\lambda}g_1,$$
provided that $\lambda\neq 0$, where $r_1$ is the scalar curvature of $(M_1,g_1)$;

(ii) $(M_i,g_i,\zeta_i)$, $2\leq i\leq n$, is a hyperbolic Yamabe soliton if and only if there exist $\lambda,\mu\in \mathbb{R}$ such that
$$\pounds_{\zeta_i}g_i=\frac{(\mu-r_i)f_i^2+\zeta_1(\zeta_1(f_i^2))+\zeta_1(\zeta_i(f_i^2))}{\lambda f_i^2-2\left(\zeta_1(f_i^2)+\zeta_i(f_i^2)\right)}g_i,$$
provided that $\lambda f_i^2\neq 2\left(\zeta_1(f_i^2)+\zeta_i(f_i^2)\right)$ in any point, where $r_i$ is the scalar curvature of $(M_i,g_i)$.
\end{proposition}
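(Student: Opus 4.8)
The plan is to reduce the soliton equation $\pounds_{\zeta_i}\pounds_{\zeta_i}g_i+\lambda\pounds_{\zeta_i}g_i=(\mu-r_i)g_i$ on each factor manifold to a relation involving only $\pounds_{\zeta_i}g_i$, which can be done precisely because $\zeta$ is $2$-Killing on the total space. The key observation is that the $2$-Killing condition $\pounds_\zeta\pounds_\zeta\bar g=0$, when restricted to vector fields tangent to a single factor $M_i$, should yield an explicit expression for $\pounds_{\zeta_i}\pounds_{\zeta_i}g_i$ in terms of lower-order data (the Lie derivative $\pounds_{\zeta_i}g_i$ and derivatives of the warping functions $f_i$). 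This is exactly the type of computation carried out in the authors' earlier work \cite{aeq, ac0, mult}, which I would invoke: the characterization of $2$-Killing fields on multiply twisted products gives formulas relating $\pounds_\zeta\pounds_\zeta\bar g$ restricted to each block to the corresponding factor quantities.

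First I would record, for $X_i,Y_i$ tangent to $M_i$, how $\pounds_\zeta\bar g$ and $\pounds_\zeta\pounds_\zeta\bar g$ decompose on the block $T M_i\otimes T M_i$. For the base factor $i=1$ the warping functions do not scale $g_1$, so the restriction of $\pounds_\zeta\bar g$ is simply $\pounds_{\zeta_1}g_1$ and the restriction of $\pounds_\zeta\pounds_\zeta\bar g$ is $\pounds_{\zeta_1}\pounds_{\zeta_1}g_1$; the vanishing of the latter (from $2$-Killingness) forces $\pounds_{\zeta_1}\pounds_{\zeta_1}g_1=0$, and substituting into \eqref{e1} gives $\lambda\pounds_{\zeta_1}g_1=(\mu-r_1)g_1$, which is precisely statement (i) after dividing by $\lambda\neq 0$. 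For a fiber factor $i\ge 2$, the metric on that block is $f_i^2\,g_i$, so Leibniz expansion of $\pounds_\zeta(f_i^2 g_i)$ and a second application of $\pounds_\zeta$ produce the warping-function terms $\zeta_1(f_i^2)$, $\zeta_i(f_i^2)$, $\zeta_1(\zeta_1(f_i^2))$, $\zeta_1(\zeta_i(f_i^2))$ appearing in the claimed formula; here one uses that $f_i$ depends on $M_1\times M_i$ so both $\zeta_1$ and $\zeta_i$ act on it nontrivially.

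Once these block restrictions are in hand, the argument is purely algebraic. The $2$-Killing hypothesis sets $\pounds_\zeta\pounds_\zeta\bar g=0$ on each block, which lets me solve for $\pounds_{\zeta_i}\pounds_{\zeta_i}g_i$ as a multiple of $g_i$ plus a multiple of $\pounds_{\zeta_i}g_i$ with coefficients built from the $f_i$-derivatives above. Substituting this into the factor soliton equation \eqref{e1} collapses everything to a scalar-coefficient identity of the form $\bigl(\lambda f_i^2-2(\zeta_1(f_i^2)+\zeta_i(f_i^2))\bigr)\pounds_{\zeta_i}g_i=\bigl((\mu-r_i)f_i^2+\zeta_1(\zeta_1(f_i^2))+\zeta_1(\zeta_i(f_i^2))\bigr)g_i$, and dividing by the nonvanishing coefficient yields (ii). The converse directions follow by reversing these substitutions, since each step is an equivalence.

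I expect the main obstacle to be the bookkeeping in the second Lie-derivative expansion for the fiber factors: correctly tracking which arguments of $\pounds_\zeta\pounds_\zeta\bar g$ survive when restricted to $TM_i$, and confirming that the cross-terms coming from $\zeta_1$ acting on $f_i$ (via the twisted dependence on $M_1\times M_i$) reassemble into exactly the combination $\zeta_1(\zeta_1(f_i^2))+\zeta_1(\zeta_i(f_i^2))$ rather than some other grouping. The cleanest route is to cite the explicit $2$-Killing decomposition formulas from \cite{mult} and read off the block restrictions directly, rather than re-deriving the full Koszul-type computation; this both shortens the proof and matches the paper's stated reliance on those earlier characterizations.
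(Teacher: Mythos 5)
Your proposal is correct and follows essentially the same route as the paper: it invokes the block decomposition of $\pounds_{\zeta}\pounds_{\zeta}\bar g$ from the authors' earlier work on $2$-Killing vector fields, uses the $2$-Killing hypothesis to reduce to the block-wise system ($\pounds_{\zeta_1}\pounds_{\zeta_1}g_1=0$ and the fiber equations involving $\zeta_1(f_i^2)$, $\zeta_i(f_i^2)$, $\zeta_1(\zeta_1(f_i^2))$, $\zeta_1(\zeta_i(f_i^2))$), and then solves the resulting scalar-coefficient identity obtained by substitution into the soliton equation, exactly as the paper does. The grouping of warping-function terms you flag as the main risk is precisely what the cited decomposition delivers, so no gap remains.
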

\begin{proof}
From \eqref{e1} and \cite{aeq} it follows that
\begin{align*}
(\pounds_{\zeta}\pounds_{\zeta}\bar g)(X,Y)&=(\pounds_{\zeta_1}\pounds_{\zeta_1}g_1)(X_1,Y_1)+\sum_{i=2}^nf_i^2(\pounds_{\zeta_i}\pounds_{\zeta_i}g_i)(X_i,Y_i)\\
&\hspace{12pt}+2\sum_{i=2}^n\left(\zeta_1(f_i^2)+\zeta_i(f_i^2)\right)(\pounds_{\zeta_i}g_i)(X_i,Y_i)\\
&\hspace{12pt}+\sum_{i=2}^n\left(\zeta_1(\zeta_1(f_i^2))+\zeta_1(\zeta_i(f_i^2))\right)g_i(X_i,Y_i)
\end{align*}
for any vector fields $X_i,Y_i$ tangent to $M_i$, where $\bar g:=g_1+\sum_{i=2}^nf_i^2g_i$ for $f_i$ positive smooth functions defined on $M_1\times M_i$.
Indeed, since $\zeta$ is a $2$-Killing vector field, we have
$$\left\{
    \begin{array}{ll}
      \pounds_{\zeta_1}\pounds_{\zeta_1}g_1=0 \\
      f_i^2(\pounds_{\zeta_i}\pounds_{\zeta_i}g_i)+2\left(\zeta_1(f_i^2)+\zeta_i(f_i^2)\right)(\pounds_{\zeta_i}g_i)+\left(\zeta_1(\zeta_1(f_i^2))+\zeta_1(\zeta_i(f_i^2))\right)g_i=0
    \end{array}
  \right.
$$
for $2\leq i\leq n$. Then $$\pounds_{\zeta_i}\pounds_{\zeta_i}g_i=(\mu-r_i)g_i-\lambda \pounds_{\zeta_i}g_i, \ \ 1\leq i\leq n$$
if and only if $\pounds_{\zeta_i}g_i$ has the expression given by (i) or (ii), respectively, for $1\leq i\leq n$.
\end{proof}

\begin{proposition}\label{p2}
If $\zeta$ is a $2$-Killing vector field on the multiply warped product manifold $M_1\!\times_{{f_2}}\!\!M_2\times \cdots\times_{f_n}\!\!M_n$, then:

(i) $(M_1,g_1,\zeta_1)$ is a hyperbolic Yamabe soliton if and only if there exist $\lambda,\mu\in \mathbb{R}$ such that
$$\pounds_{\zeta_1}g_1=\frac{\mu-r_1}{\lambda}g_1,$$
provided that $\lambda\neq 0$, where $r_1$ is the scalar curvature of $(M_1,g_1)$;

(ii) $(M_i,g_i,\zeta_i)$, $2\leq i\leq n$, is a hyperbolic Yamabe soliton if and only if there exist $\lambda,\mu\in \mathbb{R}$ such that
$$\pounds_{\zeta_i}g_i=\frac{(\mu-r_i)f_i^2+\zeta_1(\zeta_1(f_i^2))}{\lambda f_i^2-2\zeta_1(f_i^2)}g_i,$$
provided that $\lambda f_i^2\neq 2\zeta_1(f_i^2)$ in any point, where $r_i$ is the scalar curvature of $(M_i,g_i)$.
\end{proposition}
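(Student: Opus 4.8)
The plan is to recognize the multiply warped product as the special case of the multiply twisted product already handled in Proposition \ref{p1}, the single difference being that each warping function $f_i$ now depends only on the first factor $M_1$ rather than on $M_1\times M_i$. First I would record the crucial simplification this restriction produces: since $\zeta_i$ is the lift of a vector field tangent to $M_i$ while $f_i^2$ is (the pullback of) a function on $M_1$ alone, the derivative $\zeta_i(f_i^2)$ vanishes identically, and consequently so does $\zeta_1(\zeta_i(f_i^2))$.

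With these two terms gone, I would substitute directly into the expansion of $\pounds_\zeta\pounds_\zeta\bar g$ established in the proof of Proposition \ref{p1}, obtaining
$$(\pounds_{\zeta}\pounds_{\zeta}\bar g)(X,Y)=(\pounds_{\zeta_1}\pounds_{\zeta_1}g_1)(X_1,Y_1)+\sum_{i=2}^n\Big(f_i^2(\pounds_{\zeta_i}\pounds_{\zeta_i}g_i)+2\zeta_1(f_i^2)\pounds_{\zeta_i}g_i+\zeta_1(\zeta_1(f_i^2))g_i\Big)(X_i,Y_i).$$
Because the summands act on mutually distinct factors and $\zeta$ is $2$-Killing (so that the entire left-hand side vanishes), each bracket must vanish separately, giving $\pounds_{\zeta_1}\pounds_{\zeta_1}g_1=0$ together with
$$f_i^2(\pounds_{\zeta_i}\pounds_{\zeta_i}g_i)+2\zeta_1(f_i^2)\pounds_{\zeta_i}g_i+\zeta_1(\zeta_1(f_i^2))g_i=0,\qquad 2\leq i\leq n.$$

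Next I would impose the soliton equation \eqref{e1} on each factor, namely $\pounds_{\zeta_i}\pounds_{\zeta_i}g_i=(\mu-r_i)g_i-\lambda\pounds_{\zeta_i}g_i$. For $i=1$ the relation $\pounds_{\zeta_1}\pounds_{\zeta_1}g_1=0$ reduces this to $\lambda\pounds_{\zeta_1}g_1=(\mu-r_1)g_1$, which on dividing by $\lambda\neq 0$ yields assertion (i). For $2\leq i\leq n$ I would eliminate $\pounds_{\zeta_i}\pounds_{\zeta_i}g_i$ between the two relations, gather the $\pounds_{\zeta_i}g_i$ terms on one side and the multiples of $g_i$ on the other, arriving at $(\lambda f_i^2-2\zeta_1(f_i^2))\pounds_{\zeta_i}g_i=\big((\mu-r_i)f_i^2+\zeta_1(\zeta_1(f_i^2))\big)g_i$; dividing by the scalar coefficient gives assertion (ii), the proviso $\lambda f_i^2\neq 2\zeta_1(f_i^2)$ being exactly what legitimizes the division pointwise. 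The argument is essentially a transcription of Proposition \ref{p1}, so I do not expect a genuine obstacle; the only point meriting care is the vanishing of $\zeta_i(f_i^2)$, which is precisely what distinguishes the warped case from the twisted case and collapses the formula to its simpler form.
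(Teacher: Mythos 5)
Your proposal is correct and follows essentially the same route as the paper: the same expansion of $\pounds_{\zeta}\pounds_{\zeta}\bar g$, the same factor-by-factor vanishing from the $2$-Killing hypothesis, and the same elimination yielding $(\lambda f_i^2-2\zeta_1(f_i^2))\pounds_{\zeta_i}g_i=\big((\mu-r_i)f_i^2+\zeta_1(\zeta_1(f_i^2))\big)g_i$. The only cosmetic difference is that you obtain the expansion by specializing the multiply twisted formula of Proposition \ref{p1} via $\zeta_i(f_i^2)=0$, whereas the paper quotes it directly from its reference \cite{mult}; both are legitimate, and your specialization argument is sound.
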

\begin{proof}
From \eqref{e1} and \cite{mult} it follows that
\begin{align*}
(\pounds_{\zeta}\pounds_{\zeta}\bar g)(X,Y)&=(\pounds_{\zeta_1}\pounds_{\zeta_1}g_1)(X_1,Y_1)+\sum_{i=2}^nf_i^2(\pounds_{\zeta_i}\pounds_{\zeta_i}g_i)(X_i,Y_i)\\
&\hspace{12pt}+2\sum_{i=2}^n\zeta_1(f_i^2)(\pounds_{\zeta_i}g_i)(X_i,Y_i)+\sum_{i=2}^n\zeta_1(\zeta_1(f_i^2))g_i(X_i,Y_i)
\end{align*}
for any vector fields $X_i,Y_i$ tangent to $M_i$, where $\bar g:=g_1+\sum_{i=2}^nf_i^2g_i$ for $f_i$ positive smooth functions defined on $M_1$.
Indeed, since $\zeta$ is a $2$-Killing vector field, we have
$$\left\{
    \begin{array}{ll}
      \pounds_{\zeta_1}\pounds_{\zeta_1}g_1=0 \\
      f_i^2(\pounds_{\zeta_i}\pounds_{\zeta_i}g_i)+2\zeta_1(f_i^2)(\pounds_{\zeta_i}g_i)+\zeta_1(\zeta_1(f_i^2))g_i=0
    \end{array}
  \right.
$$
for $2\leq i\leq n$. Then
$$\pounds_{\zeta_i}\pounds_{\zeta_i}g_i=(\mu-r_i)g_i-\lambda \pounds_{\zeta_i}g_i, \ \ 1\leq i\leq n$$
if and only if $\pounds_{\zeta_i}g_i$ has the expression given by (i) or (ii), respectively, for $1\leq i\leq n$.
\end{proof}

\begin{proposition}\label{p3}
If $\zeta$ is a $2$-Killing vector field on the doubly warped product manifold $_{{f_2}}M_1\!\times_{{f_1}}\!\!M_2$, then $(M_1,g_1,\zeta_1)$ is a hyperbolic Yamabe soliton if and only if there exist $\lambda,\mu\in \mathbb{R}$ such that
$$\pounds_{\zeta_1}g_1=\frac{(\mu-r_1)f_2^2+\zeta_2(\zeta_2(f_2^2))}{\lambda f_2^2-2\zeta_2(f_2^2)}g_1,$$
provided that $\lambda f_2^2\neq 2\zeta_2(f_2^2)$ in any point, where $r_1$ is the scalar curvature of $(M_1,g_1)$.

Similarly, for $(M_2,g_2,\zeta_2)$.
\end{proposition}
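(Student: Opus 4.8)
The plan is to reuse verbatim the template of Propositions \ref{p1} and \ref{p2}, adapting it to the doubly warped metric $\bar g=f_2^2\,g_1+f_1^2\,g_2$. The essential difference is that here the factor $g_1$ is warped by the function $f_2$, which lives on $M_2$; consequently the directional derivatives entering the $M_1$-component of the second Lie derivative are taken along $\zeta_2$ rather than along $\zeta_1$ (and symmetrically along $\zeta_1$ for the $M_2$-component). First I would record, for vector fields $X_1,Y_1$ tangent to $M_1$, the decomposition
\begin{align*}
(\pounds_{\zeta}\pounds_{\zeta}\bar g)(X_1,Y_1)&=f_2^2(\pounds_{\zeta_1}\pounds_{\zeta_1}g_1)(X_1,Y_1)+2\zeta_2(f_2^2)(\pounds_{\zeta_1}g_1)(X_1,Y_1)\\
&\hspace{12pt}+\zeta_2(\zeta_2(f_2^2))g_1(X_1,Y_1),
\end{align*}
which is the doubly warped specialization of the formulas used above (see \cite{ac0}).

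Second, since $\zeta$ is assumed to be $2$-Killing, the left-hand side vanishes, so the $M_1$-block yields
$$f_2^2(\pounds_{\zeta_1}\pounds_{\zeta_1}g_1)+2\zeta_2(f_2^2)(\pounds_{\zeta_1}g_1)+\zeta_2(\zeta_2(f_2^2))g_1=0.$$
I would then solve this for $\pounds_{\zeta_1}\pounds_{\zeta_1}g_1$ and substitute the result into the soliton equation \eqref{e1} written for $(M_1,g_1,\zeta_1)$, namely $\pounds_{\zeta_1}\pounds_{\zeta_1}g_1+\lambda\pounds_{\zeta_1}g_1=(\mu-r_1)g_1$. Clearing the denominator $f_2^2$ collects all $\pounds_{\zeta_1}g_1$ terms on one side and all $g_1$ terms on the other, giving
$$\left(\lambda f_2^2-2\zeta_2(f_2^2)\right)\pounds_{\zeta_1}g_1=\left((\mu-r_1)f_2^2+\zeta_2(\zeta_2(f_2^2))\right)g_1.$$

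Finally, under the nondegeneracy hypothesis $\lambda f_2^2\neq 2\zeta_2(f_2^2)$ at every point, dividing by the scalar coefficient produces exactly the claimed expression for $\pounds_{\zeta_1}g_1$, and conversely this expression forces the soliton equation to hold, yielding the stated equivalence; the claim for $(M_2,g_2,\zeta_2)$ follows by exchanging the roles of the two factors and of $f_1,f_2$. The only genuine obstacle is establishing the second-Lie-derivative decomposition above with the correct placement of the warping function and of its $\zeta_2$-derivatives; once that identity is in hand, the remainder is the same purely algebraic manipulation as in the two preceding propositions.
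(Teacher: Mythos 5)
Your proposal is correct and follows essentially the same route as the paper: both rest on the decomposition of $\pounds_{\zeta}\pounds_{\zeta}\bar g$ for the doubly warped metric (cited from the same source), use the $2$-Killing hypothesis to make the $M_1$-block vanish, and then observe that the soliton equation $\pounds_{\zeta_1}\pounds_{\zeta_1}g_1+\lambda\pounds_{\zeta_1}g_1=(\mu-r_1)g_1$ is algebraically equivalent, under $\lambda f_2^2\neq 2\zeta_2(f_2^2)$, to the stated expression for $\pounds_{\zeta_1}g_1$. The intermediate identity $\left(\lambda f_2^2-2\zeta_2(f_2^2)\right)\pounds_{\zeta_1}g_1=\left((\mu-r_1)f_2^2+\zeta_2(\zeta_2(f_2^2))\right)g_1$ that you make explicit is exactly what the paper's terser ``if and only if'' step encodes.
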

\begin{proof}
From \eqref{e1} and \cite{ac0} it follows that
\begin{align*}
(\pounds_{\zeta}\pounds_{\zeta}\bar g)(X,Y)&=f_2^2(\pounds_{\zeta_1}\pounds_{\zeta_1}g_1)(X_1,Y_1)+f_1^2(\pounds_{\zeta_2}\pounds_{\zeta_2}g_2)(X_2,Y_2)\\
&\hspace{12pt}+2\zeta_2(f_2^2)(\pounds_{\zeta_1}g_1)(X_1,Y_1)+2\zeta_1(f_1^2)(\pounds_{\zeta_2}g_2)(X_2,Y_2)\\
&\hspace{12pt}+\zeta_2(\zeta_2(f_2^2))g_1(X_1,Y_1)+\zeta_1(\zeta_1(f_1^2))g_2(X_2,Y_2)
\end{align*}
for any vector fields $X_i,Y_i$ tangent to $M_i$, where $\bar g:=f_2^2g_1+f_1^2g_2$ for $f_i$ positive smooth function defined on $M_i$.
Indeed, since $\zeta$ is a $2$-Killing vector field, we have
$$\left\{
    \begin{array}{ll}
      f_2^2(\pounds_{\zeta_1}\pounds_{\zeta_1}g_1)+2\zeta_2(f_2^2)(\pounds_{\zeta_1}g_1)+\zeta_2(\zeta_2(f_2^2))g_1=0 \\
      f_1^2(\pounds_{\zeta_2}\pounds_{\zeta_2}g_2)+2\zeta_1(f_1^2)(\pounds_{\zeta_2}g_2)+\zeta_1(\zeta_1(f_1^2))g_2=0
    \end{array}
  \right..
$$
Then $$\pounds_{\zeta_1}\pounds_{\zeta_1}g_1=(\mu-r_1)g_1-\lambda \pounds_{\zeta_1}g_1$$
if and only if $\pounds_{\zeta_1}g_1$ has the above given expression.
\end{proof}

\begin{proposition}\label{p4}
If $\zeta$ is a $2$-Killing vector field on the warped product manifold $M_1\!\times_{{f}}\!\!M_2$, then:

(i) $(M_1,g_1,\zeta_1)$ is a hyperbolic Yamabe soliton if and only if there exist $\lambda,\mu\in \mathbb{R}$ such that
$$\pounds_{\zeta_1}g_1=\frac{\mu-r_1}{\lambda}g_1,$$
provided that $\lambda\neq 0$, where $r_1$ is the scalar curvature of $(M_1,g_1)$;

(ii) $(M_2,g_2,\zeta_2)$ is a hyperbolic Yamabe soliton if and only if there exist $\lambda,\mu\in \mathbb{R}$ such that
$$\pounds_{\zeta_2}g_2=\frac{(\mu-r_2)f^2+\zeta_1(\zeta_1(f^2))}{\lambda f^2-2\zeta_1(f^2)}g_2,$$
provided that $\lambda f^2\neq 2\zeta_1(f^2)$ in any point, where $r_2$ is the scalar curvature of $(M_2,g_2)$.
\end{proposition}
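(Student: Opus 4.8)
The plan is to treat this as the special case $n=2$ of Proposition \ref{p2}, since the warped product $M_1\!\times_f\!M_2$ is precisely a multiply warped product with a single warping function $f=f_2$. First I would record the decomposition of the second Lie derivative of the warped metric $\bar g=g_1+f^2g_2$, which (specializing the formula used in the proof of Proposition \ref{p2}, or equivalently invoking \cite{sh}) reads
\begin{align*}
(\pounds_{\zeta}\pounds_{\zeta}\bar g)(X,Y)&=(\pounds_{\zeta_1}\pounds_{\zeta_1}g_1)(X_1,Y_1)+f^2(\pounds_{\zeta_2}\pounds_{\zeta_2}g_2)(X_2,Y_2)\\
&\hspace{12pt}+2\zeta_1(f^2)(\pounds_{\zeta_2}g_2)(X_2,Y_2)+\zeta_1(\zeta_1(f^2))g_2(X_2,Y_2)
\end{align*}
for lifts $X=X_1+X_2$, $Y=Y_1+Y_2$ of vector fields tangent to the factors.

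Next, since $\zeta$ is $2$-Killing, the left-hand side vanishes identically. Because the $M_1$- and $M_2$-blocks are $\bar g$-orthogonal and the displayed terms separate cleanly by factor, I would split this single tensorial identity into the pair
$$\pounds_{\zeta_1}\pounds_{\zeta_1}g_1=0,\qquad f^2(\pounds_{\zeta_2}\pounds_{\zeta_2}g_2)+2\zeta_1(f^2)(\pounds_{\zeta_2}g_2)+\zeta_1(\zeta_1(f^2))g_2=0.$$
For part (i), I would substitute $\pounds_{\zeta_1}\pounds_{\zeta_1}g_1=0$ into the soliton equation \eqref{e1} for $(M_1,g_1,\zeta_1)$, reducing it to $\lambda\pounds_{\zeta_1}g_1=(\mu-r_1)g_1$; dividing by $\lambda\neq 0$ yields the stated relation, and since every step is reversible this gives the claimed equivalence.

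For part (ii), I would solve the second constraint for $\pounds_{\zeta_2}\pounds_{\zeta_2}g_2$, insert it into \eqref{e1} for $(M_2,g_2,\zeta_2)$, and collect the coefficients of $\pounds_{\zeta_2}g_2$; after clearing $f^2$ this produces
$$(\lambda f^2-2\zeta_1(f^2))\pounds_{\zeta_2}g_2=\big((\mu-r_2)f^2+\zeta_1(\zeta_1(f^2))\big)g_2,$$
and dividing by the pointwise-nonvanishing factor $\lambda f^2-2\zeta_1(f^2)$ isolates $\pounds_{\zeta_2}g_2$ in the asserted form. Reading the manipulations backward recovers \eqref{e1}, so the condition is again necessary and sufficient. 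The computation is elementary once the decomposition is in hand, so the only point requiring genuine care is the first step: checking that the second-Lie-derivative formula specializes correctly to the single-warping-function case and that no surviving cross terms couple the two factors. I expect the algebraic isolation of $\pounds_{\zeta_2}g_2$ and the bookkeeping of the nonvanishing hypotheses to be entirely routine.
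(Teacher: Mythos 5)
Your proof is correct, and every computation checks out: the $n=2$ specialization of the multiply warped decomposition, the splitting into the two factor constraints under the $2$-Killing hypothesis (the cross terms $(\pounds_{\zeta}\pounds_{\zeta}\bar g)(X_1,Y_2)$ indeed vanish for lifted fields, so no coupling survives), and the algebraic isolation of $\pounds_{\zeta_1}g_1$ and $\pounds_{\zeta_2}g_2$ all match the claimed formulas. The only difference from the paper is the choice of parent result: the paper proves Proposition \ref{p4} in one line as the special case of the \emph{doubly} warped Proposition \ref{p3} with $f_1=f$ and $f_2=1$, whereas you obtain it as the $n=2$ case of the \emph{multiply} warped Proposition \ref{p2}. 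Both are legitimate, since a warped product is simultaneously a doubly warped product with one trivial warping function and a two-factor multiply warped product. Your route is arguably the more direct of the two: parts (i) and (ii) of Proposition \ref{p2} specialize verbatim, with no need to observe that $\zeta_2(f_2^2)=0$ when $f_2\equiv 1$ in order to collapse the doubly warped formula $\pounds_{\zeta_1}g_1=\frac{(\mu-r_1)f_2^2+\zeta_2(\zeta_2(f_2^2))}{\lambda f_2^2-2\zeta_2(f_2^2)}g_1$ down to $\frac{\mu-r_1}{\lambda}g_1$, which is the small extra step the paper's citation-style proof silently absorbs. You also unfold the full mechanics (decomposition, splitting, substitution into \eqref{e1}, reversibility for the equivalence), which makes your version self-contained where the paper's is a pure reference; the cost is only length.
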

\begin{proof}
This is a particular case of Proposition \ref{p3} for $f_1=f$ and $f_2=1$.
\end{proof}

We can now provide the following examples.

\begin{example}
Let $M\!\times_{{f}}\!I$ be a warped product spacetime with a $2$-Killing vector field $\zeta$, where $I$ is an open real interval endowed with the metric $-dt^2$, and $(M,g)$ is a Riemannian manifold of constant scalar curvature $r$. Then $(M,g,\zeta_1,\lambda,r)$ with $\lambda\neq 0$ is a hyperbolic Yamabe soliton, by means of Proposition \ref{p4} (i).
\end{example}

\begin{example}
Let $I\!\times_{{f}}\!M$ be a warped product spacetime with a $2$-Killing vector field $\zeta$, where $I$ is an open real interval endowed with the metric $-dt^2$, and $(M,g)$ is a Riemannian manifold of constant scalar curvature $r$. If the warping function $f:I\rightarrow \mathbb R^*_+$ satisfies
$$f^2(t)=c_1e^{\sqrt{|\mu-r|}t}+c_2e^{-\sqrt{|\mu-r|}t}, \ \ c_1,c_2\in \mathbb{R},$$
then $(M,g,\zeta_2)$ is a hyperbolic Yamabe soliton, by means of Proposition \ref{p4} (ii).
\end{example}

Since under a certain assumption (when the denominator is nowhere zero), which we shall call condition $(*)$, in all the above cases from Propositions \ref{p1}, \ref{p2}, \ref{p3}, \ref{p4} the first Lie derivative of the metric is proportional to the Riemannian metric, based on Theorem 2.1 from \cite{zhang}, we obtain the following classification result.

\begin{theorem}
If any of the factor manifolds $(M_i,g_i,\zeta_i)$ of a multiply twisted, multiply warped, doubly warped, or warped product manifold endowed with a $2$-Killing vector field $\zeta$ is a complete hyperbolic Yamabe soliton satisfying condition $(*)$ and having as potential vector field a vector field of gradient type, then the product manifold is isometric to a complete warped product manifold of one of the following forms:

(i) $\mathbb R\times_{f} N$ with $g=dt^2+f^2g_N$, for $(N,g_N)$ an $(n_i-1)$-dimensional Riemannian manifold;

(ii) $\mathbb R\times_{f} \mathbb R^{n_i-1}$ with $g=dt^2+f^2dt_{n_i-1}$, $t\geq 0$;

(iii) $\mathbb S\times_{f} \mathbb S^{n_i-1}$ with $g=dt^2+f^2dt_{n_i-1}$, $t\in [a,b]$.
\end{theorem}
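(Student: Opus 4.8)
The plan is to reduce the statement to the classical Tashiro-type classification of complete Riemannian manifolds carrying a non-trivial concircular gradient field, which is precisely the content of Theorem 2.1 in \cite{zhang}. The bridge to that theorem is the observation, already recorded in the paragraph preceding the statement, that condition $(*)$ forces the first Lie derivative of the factor metric to be pointwise proportional to the metric itself. So the whole argument is a translation of the soliton/$2$-Killing data into the concircular framework, followed by a citation.

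First I would fix the factor $(M_i,g_i,\zeta_i)$ that is assumed to be a complete hyperbolic Yamabe soliton and apply whichever of Propositions \ref{p1}--\ref{p4} is relevant to the warped-product type at hand. In each of the four cases, condition $(*)$ (the nowhere-vanishing denominator) guarantees the existence of a smooth function $\rho$ on $M_i$ with
\begin{align*}
\pounds_{\zeta_i}g_i=\rho\, g_i,
\end{align*}
the explicit form of $\rho$ being the expression displayed in the corresponding proposition. This is the only place where the $2$-Killing hypothesis on $\zeta$ and the warped-product structure enter the argument; from here on everything is intrinsic to $(M_i,g_i)$.

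Next I would bring in the gradient hypothesis. Writing $\zeta_i=\nabla h$ for a smooth function $h$ on $M_i$, and recalling that $\pounds_{\nabla h}g_i=2\,\Hess(h)$, the proportionality above becomes
\begin{align*}
\Hess(h)=\tfrac{1}{2}\rho\, g_i.
\end{align*}
Thus $h$ is a non-constant smooth function whose Hessian is a pointwise scalar multiple of the metric; equivalently, $\zeta_i=\nabla h$ is a concircular (conformal-gradient) vector field on the complete manifold $(M_i,g_i)$. At this point I would invoke Theorem 2.1 of \cite{zhang}: a complete Riemannian manifold admitting such a function is isometric to one of the three warped-product models, the trichotomy being governed by the critical set of $h$ (no critical point yielding case (i), a single critical point yielding case (ii), and two critical points yielding case (iii)). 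This produces the asserted classification of $(M_i,g_i)$.

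The step I expect to carry the real weight is the passage from the pointwise identity $\Hess(h)=\tfrac12\rho\,g_i$ to the global isometry type, but that passage is exactly what the cited classification theorem supplies, so within the present paper the genuine obstacle is only bookkeeping: verifying that $\rho$ is a bona fide smooth function and that it is not identically zero (otherwise $\zeta_i$ would be parallel and the warped-product conclusion degenerates), and confirming that it is completeness of the soliton that allows one to promote Tashiro's local warped-product structure to the three global models. No lengthy computation is required once the proportionality $\pounds_{\zeta_i}g_i=\rho\,g_i$ has been extracted.
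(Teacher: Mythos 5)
Your proposal is correct and follows essentially the same route as the paper: the paper gives no separate proof beyond the paragraph preceding the statement, which notes that Propositions 4.1--4.4 under condition $(*)$ make $\pounds_{\zeta_i}g_i$ pointwise proportional to $g_i$, and then, using the gradient hypothesis, cites Theorem 2.1 of \cite{zhang} for the classification into the three warped-product models. Your reconstruction (writing $\zeta_i=\nabla h$, obtaining $\Hess(h)=\tfrac12\rho\,g_i$, and invoking the concircular-field classification) is in fact slightly more explicit than the paper itself, and your closing remarks on smoothness and non-triviality of $\rho$ flag exactly the bookkeeping the paper leaves implicit.
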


We shall further determine the conditions for the factor manifolds to be hyperbolic Yamabe solitons when the manifold is a hyperbolic Yamabe soliton.

\begin{proposition}
If the multiply twisted product manifold $(M_1\!\times_{{f_2}}\!\!M_2\times \cdots\times_{f_n}\!\!M_n,\zeta)$ is a hyperbolic Yamabe soliton, then:

(i) $(M_1,g_1,\zeta_1)$ is a hyperbolic Yamabe soliton if and only if $\bar r-r_1$ is constant;

(ii) $(M_i,g_i,\zeta_i)$, $2\leq i\leq n$, is a hyperbolic Yamabe soliton if and only if
$$\left\{
  \begin{array}{ll}
    \frac{\displaystyle \zeta_1(f_i^2)+\zeta_i(f_i^2)}{\displaystyle f_i^2} \ \ \textrm{is constant}\\
    \bar r-r_i+\frac{\displaystyle \zeta_1(\zeta_1(f_i^2))+\zeta_1(\zeta_i(f_i^2))}{\displaystyle f_i^2}  \ \ \textrm{is constant}
  \end{array}
\right.,$$
where $\bar r$ is the scalar curvature of the multiply twisted product manifold and $r_i$ is the scalar curvature of $(M_i,g_i)$.
\end{proposition}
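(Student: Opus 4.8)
The plan is to exploit the decomposition of the first and second Lie derivatives of $\bar g$ along $\zeta$ that already underlies the proof of Proposition \ref{p1}, and to project the global soliton equation onto each factor. Writing the hypothesis that $(M_1\!\times_{f_2}\!\!M_2\times\cdots\times_{f_n}\!\!M_n,\zeta)$ is a hyperbolic Yamabe soliton as
$$\pounds_{\zeta}\pounds_{\zeta}\bar g+\lambda\pounds_{\zeta}\bar g=(\mu-\bar r)\bar g,$$
I would evaluate both sides on pairs $(X_i,Y_i)$ of vector fields tangent to a single factor $M_i$. For this I need, alongside the formula for $\pounds_{\zeta}\pounds_{\zeta}\bar g$ recalled in Proposition \ref{p1}, the companion expansion
$$(\pounds_{\zeta}\bar g)(X,Y)=(\pounds_{\zeta_1}g_1)(X_1,Y_1)+\sum_{i=2}^n f_i^2(\pounds_{\zeta_i}g_i)(X_i,Y_i)+\sum_{i=2}^n\big(\zeta_1(f_i^2)+\zeta_i(f_i^2)\big)g_i(X_i,Y_i),$$
which follows by the same computation (using $\zeta(f_i^2)=\zeta_1(f_i^2)+\zeta_i(f_i^2)$ since $f_i$ lives on $M_1\times M_i$) and which specializes, on $M_1$, to $\pounds_{\zeta_1}g_1$ and, on $M_i$, to $f_i^2\pounds_{\zeta_i}g_i+(\zeta_1(f_i^2)+\zeta_i(f_i^2))g_i$.

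For (i) I would restrict to $X=X_1$, $Y=Y_1$ tangent to $M_1$, where all warping terms drop out and $\bar g$ reduces to $g_1$. The projected equation becomes
$$\pounds_{\zeta_1}\pounds_{\zeta_1}g_1+\lambda\pounds_{\zeta_1}g_1=(\mu-\bar r)g_1.$$
Comparing this with the soliton equation $\pounds_{\zeta_1}\pounds_{\zeta_1}g_1+\lambda\pounds_{\zeta_1}g_1=(\mu_1-r_1)g_1$ for $(M_1,g_1,\zeta_1)$ with the same $\lambda$, the two coincide precisely when $\mu-\bar r=\mu_1-r_1$, i.e. when $\bar r-r_1=\mu-\mu_1$ is a (necessarily real) constant; conversely, if $\bar r-r_1=c$ is constant, the projected equation is already the soliton equation with $\lambda_1=\lambda$ and $\mu_1=\mu-c$. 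This yields the stated equivalence.

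For (ii), with $2\le i\le n$, I would restrict to $X=X_i$, $Y=Y_i$, where $\bar g(X_i,Y_i)=f_i^2g_i(X_i,Y_i)$, collect the three contributions, and divide through by $f_i^2$ (legitimate since $f_i>0$). After rearranging, the projected equation takes the form
$$\pounds_{\zeta_i}\pounds_{\zeta_i}g_i+\Big(\lambda+\frac{2(\zeta_1(f_i^2)+\zeta_i(f_i^2))}{f_i^2}\Big)\pounds_{\zeta_i}g_i=\Big(\mu-\bar r-\frac{\zeta_1(\zeta_1(f_i^2))+\zeta_1(\zeta_i(f_i^2))}{f_i^2}-\lambda\frac{\zeta_1(f_i^2)+\zeta_i(f_i^2)}{f_i^2}\Big)g_i.$$
Matching this against $\pounds_{\zeta_i}\pounds_{\zeta_i}g_i+\lambda_i\pounds_{\zeta_i}g_i=(\mu_i-r_i)g_i$, the coefficient of $\pounds_{\zeta_i}g_i$ must be a real constant $\lambda_i$, which forces $(\zeta_1(f_i^2)+\zeta_i(f_i^2))/f_i^2$ to be constant — the first listed condition — and, given that, the coefficient of $g_i$ must be $\mu_i-r_i$ with $\mu_i$ constant, which (since $\lambda(\zeta_1(f_i^2)+\zeta_i(f_i^2))/f_i^2$ is then already constant) is exactly the requirement that $\bar r-r_i+(\zeta_1(\zeta_1(f_i^2))+\zeta_1(\zeta_i(f_i^2)))/f_i^2$ be constant — the second listed condition. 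The converse is immediate by reading the constants off.

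The step I expect to be delicate is the ``only if'' direction of the coefficient matching: a priori a factor soliton could be witnessed by a pair $(\lambda_i,\mu_i)$ with $\lambda_i\neq\lambda+2(\zeta_1(f_i^2)+\zeta_i(f_i^2))/f_i^2$, which is harmless unless $\pounds_{\zeta_i}g_i$ happens to be pointwise proportional to $g_i$, making $\pounds_{\zeta_i}g_i$ and $g_i$ linearly dependent. I would dispose of this by noting that in the nondegenerate regime the tensors $\pounds_{\zeta_i}g_i$ and $g_i$ force the coefficients to agree, and that the proportional case is handled by absorbing the scalar factor and re-deriving the same two scalar constancy conditions directly; the analogous remark settles the matched-$\lambda$ choice in (i).
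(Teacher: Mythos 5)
Your proposal is correct and takes essentially the same approach as the paper: the paper's entire proof of this proposition consists of citing the soliton equation \eqref{e1}, the decomposition of $\pounds_{\zeta}\pounds_{\zeta}\bar g$ from Proposition \ref{p1}, and the decomposition of $\pounds_{\zeta}\bar g$ that you also write down, with the restriction to each factor and the coefficient matching left implicit, exactly the computation you spell out. The degenerate case you flag at the end (where $\pounds_{\zeta_i}g_i$ is pointwise proportional to $g_i$, so a factor soliton could be witnessed by constants other than the induced ones) is a real subtlety, but the paper silently ignores it by identifying the constants; so your version is, if anything, more careful than the paper's own proof --- though be aware that your proposed patch of ``absorbing the scalar factor'' does not genuinely dispose of that case, since there the two constancy conditions need not follow.
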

\begin{proof}
It follows from \eqref{e1}, from the expression of the second Lie derivative of the metric given in Proposition \ref{p1}, and \cite{aeq}
\begin{align*}
(\pounds_{\zeta}\bar g)(X,Y)&=(\pounds_{\zeta_1}g_1)(X_1,Y_1)\\
&\hspace{12pt}+\sum_{i=2}^nf_i^2(\pounds_{\zeta_i}g_i)(X_i,Y_i)+\sum_{i=2}^n\left(\zeta_1(f_i^2)+\zeta_i(f_i^2)\right)g_i(X_i,Y_i)
\end{align*}
for any vector fields $X_i,Y_i$ tangent to $M_i$, where $\bar g:=g_1+\sum_{i=2}^nf_i^2g_i$ for $f_i$ positive smooth functions defined on $M_1\times M_i$.
\end{proof}

\begin{proposition}
If the multiply warped product manifold $(M_1\!\times_{{f_2}}\!\!M_2\times \cdots\times_{f_n}\!\!M_n,\zeta)$ is a hyperbolic Yamabe soliton, then:

(i) $(M_1,g_1,\zeta_1)$ is a hyperbolic Yamabe soliton if and only if $\bar r-r_1$ is constant;

(ii) $(M_i,g_i,\zeta_i)$, $2\leq i\leq n$, is a hyperbolic Yamabe soliton if and only if
$$\left\{
  \begin{array}{ll}
    \frac{\displaystyle \zeta_1(f_i^2)}{\displaystyle f_i^2} \ \ \textrm{is constant}\\
    \bar r-r_i+\frac{\displaystyle \zeta_1(\zeta_1(f_i^2))}{\displaystyle f_i^2}  \ \ \textrm{is constant}
  \end{array}
\right.,$$
where $\bar r$ is the scalar curvature of the multiply warped product manifold and $r_i$ is the scalar curvature of $(M_i,g_i)$.
\end{proposition}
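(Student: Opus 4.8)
The plan is to descend the soliton equation $\pounds_\zeta\pounds_\zeta\bar g+\lambda\pounds_\zeta\bar g=(\mu-\bar r)\bar g$ on the product to each factor. First I would record the two tensorial expansions I need: the formula for $\pounds_\zeta\pounds_\zeta\bar g$ on a multiply warped product quoted in the proof of Proposition \ref{p2}, together with the companion expansion
$$(\pounds_{\zeta}\bar g)(X,Y)=(\pounds_{\zeta_1}g_1)(X_1,Y_1)+\sum_{i=2}^nf_i^2(\pounds_{\zeta_i}g_i)(X_i,Y_i)+\sum_{i=2}^n\zeta_1(f_i^2)g_i(X_i,Y_i),$$
which follows from \cite{mult} (it is the multiply twisted formula with $\zeta_i(f_i^2)=0$, since here each $f_i$ depends only on $M_1$). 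A useful preliminary observation is that every term in both expansions, as in $\bar g$ itself, is block diagonal with respect to the splitting $TM=\bigoplus_iTM_i$: the cross evaluations vanish identically on both sides, so the soliton equation is equivalent to its restrictions to vector fields tangent to a single factor, and these blocks are the only objects I have to analyse.

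For (i), restricting the expansions to $X,Y$ tangent to $M_1$ kills all warping contributions and leaves
$$\pounds_{\zeta_1}\pounds_{\zeta_1}g_1+\lambda\pounds_{\zeta_1}g_1=(\mu-\bar r)g_1.$$
Here the coefficient of $\pounds_{\zeta_1}g_1$ is already the constant $\lambda$, so the only possible soliton data on $M_1$ is $\lambda_1=\lambda$ together with some $\mu_1$. Comparing with $\pounds_{\zeta_1}\pounds_{\zeta_1}g_1+\lambda\pounds_{\zeta_1}g_1=(\mu_1-r_1)g_1$ forces $\mu-\bar r=\mu_1-r_1$, i.e. $\bar r-r_1=\mu-\mu_1$; hence $(M_1,g_1,\zeta_1)$ is a hyperbolic Yamabe soliton precisely when $\bar r-r_1$ is constant, which I can read off in both directions.

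For (ii) I restrict to $X,Y$ tangent to $M_i$ with $i\ge 2$, collect the $\pounds_{\zeta_i}\pounds_{\zeta_i}g_i$, $\pounds_{\zeta_i}g_i$ and $g_i$ terms, and divide by $f_i^2>0$ to obtain
$$\pounds_{\zeta_i}\pounds_{\zeta_i}g_i+\left(\lambda+\frac{2\zeta_1(f_i^2)}{f_i^2}\right)\pounds_{\zeta_i}g_i=\left(\mu-\bar r-\frac{\zeta_1(\zeta_1(f_i^2))}{f_i^2}-\lambda\,\frac{\zeta_1(f_i^2)}{f_i^2}\right)g_i.$$
This is a soliton equation for $(M_i,g_i,\zeta_i)$ exactly when its two coefficient functions are admissible, i.e. when $\lambda_i:=\lambda+2\zeta_1(f_i^2)/f_i^2$ is constant and the right-hand coefficient has the form $\mu_i-r_i$ for a constant $\mu_i$. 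The first requirement is equivalent to $\zeta_1(f_i^2)/f_i^2$ being constant, the first listed condition. Assuming it, the term $\lambda\,\zeta_1(f_i^2)/f_i^2$ is then constant, so $\mu_i:=\mu-\bar r-\zeta_1(\zeta_1(f_i^2))/f_i^2-\lambda\,\zeta_1(f_i^2)/f_i^2+r_i$ is constant if and only if $\bar r-r_i+\zeta_1(\zeta_1(f_i^2))/f_i^2$ is constant, the second listed condition; explicitly one takes $\lambda_i=\lambda+2\zeta_1(f_i^2)/f_i^2$ and $\mu_i=\mu-\lambda\,\zeta_1(f_i^2)/f_i^2-\big(\bar r-r_i+\zeta_1(\zeta_1(f_i^2))/f_i^2\big)$.

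The step I expect to be delicate is the clean separation of the two conditions in (ii): the warping produces a $2\zeta_1(f_i^2)$ contribution to the coefficient of $\pounds_{\zeta_i}g_i$ (absorbed into $\lambda_i$) and a $\lambda\zeta_1(f_i^2)$ contribution to the zeroth-order term (absorbed into $\mu_i$), and only by keeping these bookkept separately does one recover exactly the stated pair of conditions rather than a single combined one. A minor point to handle carefully is the passage from ``coefficients are constants'' to ``the factor is a soliton'': I will note that since $\pounds_{\zeta_i}g_i$ is not in general pointwise proportional to $g_i$, subtracting the defining soliton identity from the displayed one forces the bracketed coefficient to equal $\lambda_i$, so the induced constants are the only candidates and the equivalences are genuine.
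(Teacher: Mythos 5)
Your proof is correct and takes essentially the same route as the paper's: the paper's (very terse) proof consists precisely of substituting the two Lie-derivative expansions you quote --- the second-order one from Proposition \ref{p2} and the first-order one from \cite{mult} --- into the soliton equation \eqref{e1} and reading off the diagonal blocks. Your write-up simply carries out the resulting blockwise coefficient matching explicitly, dividing by $f_i^2$ and absorbing the warping terms into $\lambda_i$ and $\mu_i$, and is in fact more careful than the paper about why the induced constants are the only admissible soliton data.
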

\begin{proof}
It follows from \eqref{e1}, from the expression of the second Lie derivative of the metric given in Proposition \ref{p2}, and \cite{mult}
\begin{align*}
(\pounds_{\zeta}\bar g)(X,Y)&=(\pounds_{\zeta_1}g_1)(X_1,Y_1)\\
&\hspace{12pt}+\sum_{i=2}^nf_i^2(\pounds_{\zeta_i}g_i)(X_i,Y_i)+\sum_{i=2}^n\zeta_1(f_i^2)g_i(X_i,Y_i)
\end{align*}
for any vector fields $X_i,Y_i$ tangent to $M_i$, where $\bar g:=g_1+\sum_{i=2}^nf_i^2g_i$ for $f_i$ positive smooth functions defined on $M_1$.
\end{proof}

\begin{proposition}\label{p7}
If the doubly warped product manifold $(_{{f_2}}M_1\!\times_{{f_1}}\!\!M_2,\zeta)$ is a hyperbolic Yamabe soliton, then
$(M_1,g_1,\zeta_1)$ is a hyperbolic Yamabe soliton if and only if
$$\left\{
  \begin{array}{ll}
    \frac{\displaystyle \zeta_2(f_2^2)}{\displaystyle f_2^2} \ \ \textrm{is constant}\\
    \bar r-r_1+\frac{\displaystyle \zeta_2(\zeta_2(f_2^2))}{\displaystyle f_2^2}  \ \ \textrm{is constant}
  \end{array}
\right.,$$
where $\bar r$ is the scalar curvature of the doubly warped product manifold and $r_1$ is the scalar curvature of $(M_1,g_1)$.

Similarly, for $(M_2,g_2,\zeta_2)$.
\end{proposition}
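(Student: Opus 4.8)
The plan is to decouple the soliton identity of the whole manifold along its first factor. By hypothesis $(_{f_2}M_1\times_{f_1}M_2,\bar g,\zeta,\lambda,\mu)$ satisfies \eqref{e1}, that is $\pounds_{\zeta}\pounds_{\zeta}\bar g+\lambda\pounds_{\zeta}\bar g=(\mu-\bar r)\bar g$. I would feed into this the second-order expansion of $\pounds_{\zeta}\pounds_{\zeta}\bar g$ already recorded in the proof of Proposition \ref{p3}, together with its first-order companion (from \cite{ac0})
$$(\pounds_{\zeta}\bar g)(X,Y)=f_2^2(\pounds_{\zeta_1}g_1)(X_1,Y_1)+f_1^2(\pounds_{\zeta_2}g_2)(X_2,Y_2)+\zeta_2(f_2^2)g_1(X_1,Y_1)+\zeta_1(f_1^2)g_2(X_2,Y_2).$$

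First I would evaluate the identity on vector fields $X_1,Y_1$ tangent to $M_1$. Then every term carrying a factor of $g_2$, $\pounds_{\zeta_2}g_2$ or $\pounds_{\zeta_2}\pounds_{\zeta_2}g_2$ drops out, and what remains, after collecting coefficients, is
$$f_2^2(\pounds_{\zeta_1}\pounds_{\zeta_1}g_1)+\big(2\zeta_2(f_2^2)+\lambda f_2^2\big)\pounds_{\zeta_1}g_1+\big(\zeta_2(\zeta_2(f_2^2))+\lambda\zeta_2(f_2^2)\big)g_1=(\mu-\bar r)f_2^2g_1.$$
Dividing by $f_2^2>0$ puts this in the shape
$$\pounds_{\zeta_1}\pounds_{\zeta_1}g_1+\Big(\tfrac{2\zeta_2(f_2^2)}{f_2^2}+\lambda\Big)\pounds_{\zeta_1}g_1=\Big[(\mu-\bar r)-\tfrac{\zeta_2(\zeta_2(f_2^2))}{f_2^2}-\lambda\tfrac{\zeta_2(f_2^2)}{f_2^2}\Big]g_1.$$

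The decisive step is then to read this reduced identity against the defining equation \eqref{e1} of a hyperbolic Yamabe soliton on $(M_1,g_1,\zeta_1)$: it is such an equation precisely when the coefficient of $\pounds_{\zeta_1}g_1$ is a real constant $\lambda_1$ and the coefficient of $g_1$ has the form $\mu_1-r_1$ with $\mu_1\in\mathbb R$. Here lies the one genuinely conceptual point of the argument: $\pounds_{\zeta_1}\pounds_{\zeta_1}g_1$, $\pounds_{\zeta_1}g_1$ and $g_1$ are tensor fields on $M_1$, whereas the scalar coefficients depend a priori on the $M_2$-variables through $f_2$ (and, via $\bar r$, on both factors); for these coefficients to serve as the constants required by \eqref{e1} they must in fact be constant. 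The coefficient of $\pounds_{\zeta_1}g_1$ is constant if and only if $\zeta_2(f_2^2)/f_2^2$ is constant, which is the first displayed condition; granting that, solving $\mu_1-r_1=(\mu-\bar r)-\zeta_2(\zeta_2(f_2^2))/f_2^2-\lambda\,\zeta_2(f_2^2)/f_2^2$ for $\mu_1$ shows that $\mu_1$ is a real constant if and only if $\bar r-r_1+\zeta_2(\zeta_2(f_2^2))/f_2^2$ is constant, which is the second displayed condition. Conversely, once both constancy conditions hold these same $\lambda_1,\mu_1$ turn the reduced identity into \eqref{e1} for $M_1$, completing the equivalence.

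Finally, the analogous statement for $(M_2,g_2,\zeta_2)$ is obtained by repeating the computation on vector fields tangent to $M_2$, simply interchanging the indices $1\leftrightarrow 2$ and the warping functions $f_1\leftrightarrow f_2$. The main obstacle, as flagged above, is not the algebra but the logical passage from ``$\exists\,\lambda_1,\mu_1\in\mathbb R$'' to the two constancy requirements; everything else is the bookkeeping of restricting the two Lie-derivative formulas to one factor and dividing by $f_2^2$.
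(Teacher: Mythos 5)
Your proposal is correct and follows essentially the same route as the paper: the paper's proof likewise substitutes the decomposition of $\pounds_{\zeta}\pounds_{\zeta}\bar g$ from Proposition \ref{p3} and the formula for $\pounds_{\zeta}\bar g$ from \cite{ac0} into the soliton equation \eqref{e1}, restricts to $M_1$-directions, and reads off the two constancy conditions (your write-up just makes explicit the division by $f_2^2$ and the identification of $\lambda_1,\mu_1$ that the paper leaves implicit). Your flagged ``conceptual point'' -- that the coefficients must be constants to serve in \eqref{e1} -- is exactly the step the paper glosses over with ``It follows from\dots'', so no discrepancy in substance.
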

\begin{proof}
It follows from \eqref{e1}, from the expression of the second Lie derivative of the metric given in Proposition \ref{p3}, and \cite{ac0}
\begin{align*}
(\pounds_{\zeta}\bar g)(X,Y)&=f_2^2(\pounds_{\zeta_1}g_1)(X_1,Y_1)+f_1^2(\pounds_{\zeta_2}g_2)(X_2,Y_2)\\
&\hspace{12pt}+\zeta_2(f_2^2)g_1(X_1,Y_1)+\zeta_1(f_1^2)g_2(X_2,Y_2)
\end{align*}
for any vector fields $X_i,Y_i$ tangent to $M_i$, where $\bar g:=f_2^2g_1+f_1^2g_2$ for $f_i$ positive smooth functions defined on $M_i$.
\end{proof}

\begin{proposition}\label{p5}
If the warped product manifold $(M_1\!\times_{{f}}\!M_2,\zeta)$ is a hyperbolic Yamabe soliton, then:

(i) $(M_1,g_1,\zeta_1)$ is a hyperbolic Yamabe soliton if and only if $\bar r-r_1$ is constant;

(ii) $(M_2,g_2,\zeta_2)$ is a hyperbolic Yamabe soliton if and only if
$$\left\{
  \begin{array}{ll}
    \zeta_1(\ln f) \ \ \textrm{is constant}\\
    \bar r-r_2+2\Big[\left(\frac{\displaystyle \zeta_1(f)}{\displaystyle f}\right)^2+
\frac{\displaystyle \zeta_1(\zeta_1(f))}{\displaystyle f}\Big]  \ \ \textrm{is constant}
  \end{array}
\right.,$$
where $\bar r$ is the scalar curvature of the warped product manifold, $r_1$ is the scalar curvature of $(M_1,g_1)$, and $r_2$ is the scalar curvature of $(M_2,g_2)$.
\end{proposition}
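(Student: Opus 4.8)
The plan is to treat Proposition \ref{p5} as the specialization of the doubly warped case (Proposition \ref{p7}) obtained by setting $f_1 = f$ and $f_2 = 1$, exactly as Proposition \ref{p4} was deduced from Proposition \ref{p3}; the only genuinely new work will be rewriting the resulting conditions in terms of $f$ alone. Concretely, with $\bar g = g_1 + f^2 g_2$ and $f$ a positive function on $M_1$, I would first record the first and second Lie derivatives of $\bar g$ along $\zeta = \zeta_1 + \zeta_2$. Setting $f_2 = 1$ in the formulas from Propositions \ref{p3} and \ref{p7} annihilates every term carrying a $\zeta_2$-derivative of a warping function, leaving
$$(\pounds_\zeta \bar g)(X,Y) = (\pounds_{\zeta_1} g_1)(X_1,Y_1) + f^2 (\pounds_{\zeta_2} g_2)(X_2,Y_2) + \zeta_1(f^2)\, g_2(X_2,Y_2)$$
together with the analogous expression for $\pounds_\zeta \pounds_\zeta \bar g$, in which the $M_2$-block picks up the extra terms $2\zeta_1(f^2)(\pounds_{\zeta_2}g_2)$ and $\zeta_1(\zeta_1(f^2))\,g_2$.

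Next I would substitute these into the ambient soliton equation \eqref{e1}, namely $\pounds_\zeta\pounds_\zeta\bar g + \lambda\,\pounds_\zeta\bar g = (\mu-\bar r)\bar g$, and read it off separately on vectors tangent to $M_1$ and to $M_2$, using $\bar g(X_1,Y_1)=g_1(X_1,Y_1)$ and $\bar g(X_2,Y_2)=f^2 g_2(X_2,Y_2)$. On the $M_1$-block the warping terms drop out and the equation collapses to $\pounds_{\zeta_1}\pounds_{\zeta_1}g_1 + \lambda\,\pounds_{\zeta_1}g_1 = (\mu-\bar r)g_1$; comparing this with the defining equation $\pounds_{\zeta_1}\pounds_{\zeta_1}g_1 + \lambda\,\pounds_{\zeta_1}g_1 = (\mu'-r_1)g_1$ forces $\mu' = \mu - (\bar r - r_1)$, which is a genuine real constant precisely when $\bar r - r_1$ is constant. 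This yields (i).

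For (ii) I would evaluate on the $M_2$-block and divide through by $f^2$, obtaining
$$\pounds_{\zeta_2}\pounds_{\zeta_2}g_2 + \Big(\lambda + \tfrac{2\zeta_1(f^2)}{f^2}\Big)\pounds_{\zeta_2}g_2 = \Big((\mu-\bar r) - \tfrac{\zeta_1(\zeta_1(f^2)) + \lambda\zeta_1(f^2)}{f^2}\Big)g_2.$$
Matching this against the soliton equation for $(M_2,g_2,\zeta_2)$ with constants $\lambda''$ and $\mu''$, the coefficient of $\pounds_{\zeta_2}g_2$ gives $\lambda'' = \lambda + 2\zeta_1(f^2)/f^2 = \lambda + 4\zeta_1(\ln f)$, which is constant if and only if $\zeta_1(\ln f)$ is constant; this is the first condition. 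Under that condition the coefficient of $g_2$ gives $\mu'' = \mu - \lambda\,\zeta_1(f^2)/f^2 - \big(\bar r - r_2 + \zeta_1(\zeta_1(f^2))/f^2\big)$, whose first two terms are already constant, so $\mu''$ is constant if and only if $\bar r - r_2 + \zeta_1(\zeta_1(f^2))/f^2$ is constant. The final step is the identity $\zeta_1(\zeta_1(f^2))/f^2 = 2\big[(\zeta_1(f)/f)^2 + \zeta_1(\zeta_1(f))/f\big]$, which follows from $\zeta_1(f^2) = 2f\zeta_1(f)$ and one further application of $\zeta_1$; this recovers the stated second condition.

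I do not expect a genuine obstacle here, since the argument is purely linear-algebraic once the Lie-derivative formulas are in place. The one point requiring care is conceptual rather than technical: the ambient soliton constants $(\lambda,\mu)$ do not descend unchanged to the fibre $M_2$, because the warping function injects a term $4\zeta_1(\ln f)$ into the effective dilation constant and further terms into the effective $\mu$. Demanding that these effective quantities be honest real constants, and not merely functions, is exactly what produces the two constancy conditions, and keeping track of which contributions are automatically constant (for instance $\lambda\,\zeta_1(f^2)/f^2$ once the first condition holds) versus which impose a new constraint is the only place where a stray sign or factor could slip in.
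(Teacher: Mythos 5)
Your proposal is correct and takes essentially the same route as the paper: Proposition \ref{p5} is deduced by specializing the doubly warped case (Proposition \ref{p7}) to $f_1=f$, $f_2=1$, and then rewriting the conditions via $\zeta_1(f^2)/f^2=2\zeta_1(\ln f)$ and $\zeta_1(\zeta_1(f^2))/f^2=2\big[(\zeta_1(f)/f)^2+\zeta_1(\zeta_1(f))/f\big]$. The only difference is that you spell out the block-by-block coefficient matching (the effective constants $\lambda''$, $\mu''$) that the paper leaves implicit in its terse proofs of Propositions \ref{p7} and \ref{p5}.
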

\begin{proof}
This is a particular case of the Proposition \ref{p7} for $f_1=f$ and $f_2=1$. In this case, at (ii),
$$\frac{\displaystyle \zeta_1(f^2)}{\displaystyle f^2}=2\zeta_1(\ln f)$$
and
$$\bar r-r_2+\frac{\displaystyle \zeta_1(\zeta_1(f^2))}{\displaystyle f^2}=\bar r-r_2+2\Big[\left(\frac{\displaystyle \zeta_1(f)}{\displaystyle f}\right)^2+
\frac{\displaystyle \zeta_1(\zeta_1(f))}{\displaystyle f}\Big]$$
must be constant.
\end{proof}

\begin{remark}\label{r}
We know from \cite{Bi} that the scalar curvature of a warped product manifold $M_1^{n_1}\!\times_{{f}}\!M_2^{n_2}$ is given by
$$\bar r=r_1+\frac{r_2}{f^2}-2n_2\frac{\Delta(f)}{f}-n_2(n_2-1)\frac{\Vert \nabla f\Vert^2}{f^2}.$$
\end{remark}

We shall provide some examples for the warped product manifolds $I\!\times_{{f}}\!\mathbb R^3$ and $\mathbb R^3\!\times_{{f}}\!I$, where $I$ an open real interval endowed with the metric $-dt^2$ and $\mathbb R^3$ with the canonical metric $g_0=dx_1^2+dx_2^2+dx_3^2$.

Let us firstly remark that the scalar curvature of the warped product manifold $I\!\times_{{f}}\!\mathbb R^3$ is equal to
$$\bar r=-6\left(\frac{\Delta(f)}{f}+\frac{\Vert \nabla f\Vert^2}{f^2}\right),$$
and the scalar curvature of the warped product manifold $\mathbb R^3\!\times_{{f}}\!I$ is equal to
$$\bar r=-2\frac{\Delta(f)}{f}.$$

\begin{example}
If the warped product Robertson--Walker spacetime
\linebreak
$(I\!\times_{{f}}\!\mathbb R^3,\zeta)$ is a hyperbolic Yamabe soliton,
then $(I,-dt^2,\zeta_1)$ is a hyperbolic Yamabe soliton, by means of Proposition \ref{p5} (i) and Remark \ref{r} if and only if
$$\frac{\Delta(f)}{f}+\frac{\Vert \nabla f\Vert^2}{f^2} \ \ \textrm{is constant}.$$
This condition is satisfied by $f:I\rightarrow \mathbb R^*_+$, $f(t)=e^{c_1t+c_2}, \ \ c_1,c_2\in \mathbb{R}$, since
$$\frac{\vert \Delta(f)\vert}{f}=c_1^2=\frac{\vert\Vert \nabla f\Vert^2\vert}{f^2}.$$
\end{example}

\begin{example}
If the warped product Robertson--Walker spacetime
\linebreak
$(I\!\times_{{f}}\!\mathbb R^3,\zeta)$ is a hyperbolic Yamabe soliton,
$$\zeta=k\frac{d}{dt}+\zeta_2, \ \ k\in \mathbb{R},$$ and
$$f:I\rightarrow \mathbb R^*_+, \ \ f(t)=e^{c_1t+c_2}, \ \ c_1,c_2\in \mathbb{R},$$
then $(\mathbb R^3,g_0,\zeta_2)$ is a hyperbolic Yamabe soliton, by means of Proposition \ref{p5} (ii) and Remark \ref{r} , since
$$\zeta_1(\ln f)=\frac{\zeta_1(f)}{f}=kc_1, \ \ \frac{\zeta_1(\zeta_1(f))}{f}=(kc_1)^2, \ \ \frac{\vert\Delta(f)\vert}{f}=c_1^2=\frac{\vert\Vert \nabla f\Vert^2\vert}{f^2}.$$
\end{example}

\begin{example}
If the warped product manifold $(\mathbb R^3\!\times_{{f}}\!I,\zeta)$ is a hyperbolic Yamabe soliton,
then $(\mathbb R^3,g_0,\zeta_1)$ is a hyperbolic Yamabe soliton, by means of Proposition \ref{p5} (i) and Remark \ref{r} if and only if
$$\frac{\Delta(f)}{f} \ \ \textrm{is constant}.$$
This condition is satisfied by $f:\mathbb R^3\rightarrow \mathbb R^*_+$, $f(x_1,x_2,x_3)=e^{x_1+x_2+x_3}$, since
$$\frac{\Delta(f)}{f}=3.$$
\end{example}

\begin{example}
If the warped product manifold $(\mathbb R^3\!\times_{{f}}\!I,\zeta)$ is a hyperbolic Yamabe soliton,
$$\zeta=k\frac{\partial }{\partial x_1}+\zeta_2, \ \ k\in \mathbb{R},$$ and
$$f:\mathbb R^3\rightarrow \mathbb R^*_+, \ \ f(x_1,x_2,x_3)=e^{x_1+x_2+x_3},$$
then $(I,-dt^2,\zeta_2)$ is a hyperbolic Yamabe soliton, by means of Proposition \ref{p5} (ii) and Remark \ref{r}, since
$$\zeta_1(\ln f)=\frac{\zeta_1(f)}{f}=kc_1, \ \ \frac{\zeta_1(\zeta_1(f))}{f}=(kc_1)^2, \ \ \frac{\Delta(f)}{f}=3.$$
\end{example}

\begin{example}
If the warped product manifold $(\mathbb R^3\!\times_{{f}}\!I,\zeta)$ is a hyperbolic Yamabe soliton,
$$\zeta=\sum_{i=1}^3k_i\frac{\partial }{\partial x_i}+\zeta_2, \ \ k_i\in \mathbb{R}, \ i=\overline{1,3},$$ and
$$f:\mathbb R^3\rightarrow \mathbb R^*_+, \ \ f(x_1,x_2,x_3)=e^{x_1},$$
then $(I,-dt^2,\zeta_2)$ is a hyperbolic Yamabe soliton, by means of Proposition \ref{p5} (ii) and Remark \ref{r}, since
$$\zeta_1(\ln f)=\frac{\zeta_1(f)}{f}=\sum_{i=1}^3k_i, \ \ \frac{\zeta_1(\zeta_1(f))}{f}=\left(\sum_{i=1}^3k_i\right)^2, \ \ \frac{\Delta(f)}{f}=1.$$
\end{example}

\vspace{2mm} \noindent \footnotesize
\begin{minipage}[b]{10cm}
Adara M. Blaga \\
Department of Mathematics \\
West University of Timi\c{s}oara, Romania \\
Email: adarablaga@yahoo.com
\end{minipage}

\vspace{4mm} \noindent \footnotesize
\begin{minipage}[b]{10cm}
Cihan \"{O}zg\"{u}r\\
Department of Mathematics \\
Izmir Democracy University, T\" urkiye \\
Email: cihan.ozgur@idu.edu.tr
\end{minipage}

\end{document}